\newtheorem{theorem}{Theorem}
\newtheorem{proposition}{Proposition}
\newtheorem{lemma}{Lemma}
\newcommand{\R}{{\mathbb R}}
\newcommand{\Z}{{\mathbb Z}}
\newcommand{\set}[2]{ \left\{ #1 \ \left| \ #2 \right. \right\} }
\newcommand{\one}{\overrightarrow{1}}
\newcommand{\hess}{\mathrm{Hess~}}
\title{Damping oscillatory integrals by the Hessian determinant via Schr\"{o}dinger}
\author{Philip T. Gressman\footnote{Partially supported by NSF grant DMS-1361697 and an Alfred P. Sloan Research Fellowship.}}
\date{\today}
\begin{document}
\maketitle
\begin{abstract}
We consider the question of when it is possible to force a degenerate scalar oscillatory integral to decay as fast as a nondegenerate one by restricting the support to the region where the Hessian determinant of the phase is bounded below.  We show in two dimensions that the desired outcome is not always possible, but does occur for a broad class of phases which may be described in terms of the Newton polygon.  The estimates obtained are uniform with respect to linear perturbation of the phase and uniform in the cutoff value of the Hessian determinant. In the course of the proof, we investigate a geometrically-invariant approach to making uniform estimates of qualitatively nondegenerate oscillatory integrals. The approach illuminates a previously unknown, fundamental relationship between the asymptotics of oscillatory integrals and the Schr\"{o}dinger equation.
\end{abstract}

One of the deep results of Phong, Stein, and Sturm \cite{pss2001} in their study of uniform decay rates for oscillatory integral operators in $1+1$ dimensions is the fact that, for a real polynomial phase $\Phi$ on $\R^2$ one has the uniform estimate
\begin{equation} \left| \int_{[-1,1]^2} e^{i \lambda \Phi(x,y)} \chi \left( \epsilon^{-1} \frac{\partial^2 \Phi}{\partial x \partial y} (x,y) \right) f(x) g(y) dx dy \right| \leq C (\lambda \epsilon)^{-\frac{1}{2}} ||f||_{2} ||g||_{2} \label{pssineq} \end{equation}
for any two functions $f, g \in L^2([-1,1])$ and any $\lambda, \epsilon > 0$, where $\chi$ is a smooth function supported on the interval $[1,2]$. The constant $C$ depends only on the degree of $\Phi$ and on the cutoff $\chi$.  
In this paper, we will investigate the extent to which analogues of \eqref{pssineq} are possible in higher dimensions. Questions of stability of decay for oscillatory integrals are certainly not new in harmonic analysis, nor are approaches based on weights and damping. Readers should also see the references \cite{pss1999,karpushkin1986,ps1998,py2004,ccw1999,cw2002,seeger1994} for a variety of important formulations of and approaches to these problems. Even so, outside of those problems which are one-dimensional or exhibit some sort of one-dimensional multilinearity. Of those results which are genuinely higher-dimensional, sharp decay rates are often difficult to achieve, and damping techniques have yet to be successfully employed.

 The present paper focuses on two dimensional, translation-invariant versions of \eqref{pssineq}. By the usual $L^2$ theory, the problem reduces to that of establishing the inequality
\begin{equation} \sup_{\xi \in \R^2} \left| \int_{[-1,1]^2} e^{i (\lambda \Phi(x) + \xi \cdot x)} \chi(\epsilon^{-1} \det \hess \Phi(x)) \psi(x) dx \right| \leq C \lambda^{-1} \epsilon^{-\frac{1}{2}}, \label{vdcq} \end{equation}
where, for convenience, an additional smooth cutoff function $\psi$ has been added with support in some small neighborhood of the origin. (The notation $\hess \Phi$ is reserved for the usual Hessian of $\Phi$ and $\nabla^2 \Phi$ for an intrinsic geometric version of the Hessian to appear later on.)

Unlike the robust sense in which \eqref{pssineq} holds, the inequality \eqref{vdcq} can fail under fairly routine circumstances.
If $\Phi(x_1,x_2) := (x_2 + x_1^2)^2$, one sees that the Hessian determinant equals $8(x_2+x_1^2)$. 
To evaluate the left-hand side of \eqref{vdcq} at $\xi = 0$ in the special case $\epsilon = \lambda^{-\frac{1}{2}}$, make the change of variables $x_2 \mapsto \lambda^{-1/2} x_2 - x_1^2$; the integral can easily be seen to be asymptotic to
\[ \lambda^{-\frac{1}{2}} \left( \int e^{i x_2^2} \chi(8x_2) dx_2 \right) \left( \int \psi(x_1,-x_1^2) dx \right) \]
as $\lambda \rightarrow \infty$. In particular, the coefficient of $\lambda^{-1/2}$ will typically be nonzero.  However, if the full inequality \eqref{vdcq} held, the decay would have been $\lambda^{-3/4}$.

Even so, \eqref{vdcq} is generally true if not universally.
Suppose that $\Phi$ is real analytic on a neighborhood of the origin in $\R^2$ and that $\Phi(0,0) = \nabla \Phi(0,0) = 0$, and let $\Gamma$ be the Newton polygon associated to $\Phi$ at the origin, i.e., $\Gamma \subset [0,\infty)^2$ is the convex hull of the union of all quadrants $[k_1,\infty) \times [k_2, \infty)$ such that $$\frac{\partial^{k_1+k_2} \Phi}{\partial^{k_1} x_1 \partial^{k_2} x_2}(0,0) \neq 0.$$
To each compact edge $e$ of $\Gamma$, we associate a polynomial $\phi_e$ by restricting the Taylor series of $\Phi$ at the origin to those terms lying on $e$, i.e.,
\[ \phi_e (x) := \sum_{(k_1,k_2) \in e} \frac{x_1^{k_1} x_2^{k_2}}{k_1! k_2!} \frac{\partial^{k_1+k_2} \Phi}{\partial^{k_1} x_1 \partial^{k_2} x_2}(0,0). \]
For each edge polynomial $\phi_e$, it will be assumed that the mapping $(x_1, x_2) \mapsto \nabla \phi_e(x_1,x_2) $ has at at worst Whitney fold singularities away from the $x_1$- and $x_2$-axes. If the edge $e$ happens to meet the first (horizontal) axis in $[0,\infty)^2$, the mapping $x \mapsto \nabla \phi_e(x)$ will be required to have no worse than a Whitney fold singularity on the $x_2$-axis away from the origin, and likewise for edges meeting the second (vertical axis) and singularities on the $x_1$-axis away from the origin.  By a Whitney fold singularity of a mapping between manifolds of the same dimension, we mean that when the mapping has a degenerate differential at a point, it is degenerate on a hypersurface passing through any such point, that the determinant of the differential vanishes only to first order near that hypersurface, and that the differential is injective when restricted to the hypersurface. The reader will note that this condition is slightly stronger than the usual condition of Var\v{c}enko \cite{varcenko1976}, but the extra strength seems to be necessary as the local behavior of both $\Phi$ and $\det \hess \Phi$ must be simultaneously described by the structure of this single Newton polygon.

The main result of this paper is as follows:
\begin{theorem}
When $\Phi$ is as above, there is a neighborhood $U$ of the origin in $\R^2$ such that, for every smooth $\psi$ supported on $U$ and every smooth $\chi$ supported on $[-2,-1] \cup [2,1]$, there is a finite $C$ such that
\begin{equation} \sup_{\xi \in \R^2} \left| \int e^{i \lambda ( \Phi(x) + \xi \cdot x)} \chi( \epsilon^{-1} \det \hess \Phi(x)) \psi(x) dx \right| \leq C \lambda^{-1} \epsilon^{-\frac{1}{2}} \log^s \frac{1}{\epsilon} \label{mainest} \end{equation} for all positive $\lambda$ and $\epsilon$. The exponent $s$ equals zero unless the Newton polygon $\Gamma$ meets the diagonal at a vertex, in which case $s=1$. \label{mainthm}
\end{theorem}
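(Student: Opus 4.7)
My plan is to reduce the estimate to a finite collection of model problems by an anisotropic dyadic decomposition adapted to the Newton polygon $\Gamma$. Using a partition of unity on $U \setminus \{0\}$, I would write the integral as a sum over bi-dyadic boxes $B_{j,k} \approx \{|x_1| \sim 2^{-j}, \, |x_2| \sim 2^{-k}\}$; each pair $(j,k)$ lies in the cone dual to exactly one vertex or compact edge $e$ of $\Gamma$. On a box dual to an edge $e$, the rescaling $(x_1,x_2) = (2^{-j} y_1, 2^{-k} y_2)$ transforms the phase to $2^{-\tau(j,k)} (\phi_e(y) + (\textrm{controlled perturbation}))$, where $\tau(j,k) = j k_1 + k k_2$ for any $(k_1,k_2) \in e$; simultaneously $\det \hess \Phi$ rescales by an explicit power of $2$, the translation $\xi \cdot x$ becomes a new translation $\eta \cdot y$, and the Lebesgue measure contributes $2^{-j-k}$. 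On boxes dual to a vertex of $\Gamma$, $\Phi$ is dominated after rescaling by the single corresponding monomial and the analysis simplifies.

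The central task is then the following model estimate on each edge $e$:
\[ \sup_{\eta \in \R^2} \left| \int e^{i \mu (\phi_e(y) + \eta \cdot y)} \chi(\tilde\epsilon^{-1} \det \hess \phi_e(y)) \tilde\psi(y) dy \right| \leq C \mu^{-1} \tilde\epsilon^{-1/2}, \]
uniform in the rescaled parameters $\mu, \tilde\epsilon > 0$ and in $\eta \in \R^2$. This is where the geometric Schr\"odinger framework announced in the abstract enters. Because $\nabla \phi_e$ has at worst Whitney fold singularities off the coordinate axes, the level set $\{\det \hess \phi_e = t\}$ is for $t \neq 0$ a smooth curve along which $\hess \phi_e$ is transversally non-degenerate and degenerates only to first order in a single eigendirection. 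Realizing the oscillatory integral as an evolution of the initial density $\tilde\psi$ under the Hamiltonian associated to $\phi_e$, stationary phase in the non-degenerate direction yields $\mu^{-1/2}$; the fold hypothesis then bounds the width of the damped strip $\{|\det \hess \phi_e| \sim \tilde\epsilon\}$ by $O(\tilde\epsilon)$ in its single transverse direction, so integrating across the strip yields the additional $\mu^{-1/2} \tilde\epsilon^{-1/2}$. Uniformity in $\eta$ is naturally encoded at the operator level, since the translation corresponds to a momentum shift that commutes with the Hessian damping.

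Assembling the pieces, the volume factor $2^{-j-k}$ and the combined rescaling factor in $\mu^{-1} \tilde\epsilon^{-1/2}$ cancel, so every dyadic piece contributes at most $O(\lambda^{-1} \epsilon^{-1/2})$. The cutoff $\chi(\epsilon^{-1} \det \hess \Phi)$ severely restricts the effective range of $(j,k)$: for an edge whose dual cone is one-dimensional, only $O(1)$ scales per edge survive, giving the clean bound. The exception is a vertex of $\Gamma$ lying on the diagonal $\{k_1 = k_2\}$: its dual cone is two-dimensional, permitting a one-parameter family of scales per fixed value of $\det \hess \Phi$, and summing over the $\log_2(1/\epsilon)$ admissible scales produces the advertised logarithmic factor exactly when $s=1$.

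The principal obstacle is the model estimate itself --- retaining uniformity in $\eta$ while extracting the full $\tilde\epsilon^{-1/2}$ gain near the fold locus, where naive stationary phase breaks down because critical points can coalesce with the degeneracy curve. The Schr\"odinger reformulation is engineered precisely to dispose of this difficulty: interpreting $\det \hess \phi_e$ as a local rate of dispersion for the associated Schr\"odinger evolution yields a translation-invariant dispersive estimate whose $\eta$-uniformity is manifest. Once that model estimate is in hand, the remaining geometric bookkeeping over $\Gamma$, including the perturbation argument that absorbs the lower-order terms of $\Phi$ on each rescaled box (using the stability of the Whitney fold condition), is routine.
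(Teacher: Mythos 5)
Your overall architecture matches the paper's closely: a bi-dyadic decomposition adapted to the Newton polygon, rescaling to model problems on unit boxes, and the Schr\"{o}dinger-based framework for uniform nondegenerate estimates. However, the accounting for the logarithmic factor contains a genuine error. The claim that ``only $O(1)$ scales per edge survive'' the Hessian cutoff is false. After rescaling the $k$-th box along the dual ray of edge $\ell$, the Hessian determinant becomes $\approx 2^{-2(d_\ell-1)\tilde\beta_\ell k}\,\det \hess \phi_e(y)$ (plus perturbation), and since the Whitney-fold hypothesis only guarantees that the zero set of $\det\hess\phi_e$ is a nice curve --- not that it is empty --- the rescaled determinant sweeps the full range from $0$ up to $\approx 2^{-2(d_\ell-1)\tilde\beta_\ell k}$ on every such box. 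Hence $\chi(\epsilon^{-1}\det\hess\Phi)$ is nonzero on a portion of every box with $k \lesssim \log(1/\epsilon)$, and the same is true for vertices (the constraint $(\alpha-\one)\cdot j \approx \tfrac12\log_2(1/\epsilon)$ is a full slab). Your assembly therefore always produces a logarithm, and the statement that $s=0$ generically would not follow.

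The missing ingredient is a non-stationary phase estimate giving geometric control in $k$ (or in $j$ along the vertex slab), combined with the fact that the Newton distance $d_\ell$ of every compact edge strictly exceeds $1$. For fixed $\xi$, the gradient of the rescaled phase $\Phi_k^\xi$ is bounded uniformly below on all but $O(1)$ of the admissible boxes; integrating by parts and unwinding the scaling yields a per-box bound $\approx \lambda^{-1}\, 2^{(d_\ell-1)\tilde\beta_\ell k}$, which grows geometrically because $d_\ell>1$, so the sum over $k\lesssim\log(1/\epsilon)$ is dominated by the last term $\approx \lambda^{-1}\epsilon^{-1/2}$. For a vertex $\alpha$ off the diagonal, the analogous gain comes from the transversality of the Hessian-size constraint $(\alpha-\one)\cdot j\approx\tfrac12\log_2(1/\epsilon)$ and the phase-scale constraint $\alpha\cdot j\approx\log_2\lambda$; this transversality fails precisely when $\alpha$ is parallel to $\alpha-\one$, i.e.\ when $\alpha$ lies on the diagonal. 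That parallelism --- not the dimension of the dual cone --- is the correct mechanism behind the residual logarithm, and it is also why edges crossing the diagonal in their interior do \emph{not} produce a log. Finally, the heuristic you give for the model estimate (``width of the damped strip'') is not how the $\tilde\epsilon^{-1/2}$ gain is actually extracted in the paper: after a one-dimensional reduction along the foliation by level sets of $\det\hess\Phi$, the key step is a tailored van der Corput lemma for one-variable phases $f$ satisfying $|f''-g|\lesssim|f'|$ with $|g|\approx\delta$, which is needed because critical points can slide into the damped strip as $\xi$ varies and naive stationary phase degenerates there.
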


The proof of Theorem \ref{mainthm} proceeds in two stages. The first takes place in Section \ref{geomsec}, in which uniform, geometrically-invariant estimates for {nondegenerate} scalar oscillatory integrals are investigated. The challenge is that, while any critical points of $\Phi + \xi \cdot x$ on the support of the cutoff are technically nondegenerate, there is essentially no quantitative control of the nondegeneracy. Consequently, there is need for uniform estimates which hold for all $\lambda$ rather than just being true as $\lambda \rightarrow \infty$.  The remarkable thing about this analysis is that it identifies a previously-unknown, fundamental relationship between the asymptotic expansion of any nondegenerate oscillatory integral and the formal expansion of certain solution operators of Schr\"{o}dinger-type equations. The basis for this connection is found in Lemma \ref{bigschrodlem}, which establishes that any smooth phase $\Phi$ with a nondegenerate critical point at $p$ in a manifold with torsion-free connection $\nabla$ intrinsically generates (i.e., without reference to any choice of coordinates) a second-order operator $\square$ in a neighborhood of $p$ for which 
\[ \left( \frac{\partial}{\partial t} - \frac{i}{2} \square \right) \left[ t^{-\frac{n}{2}} e^{i t^{-1} (\Phi - \Phi(p))} \right] = 0 \]
holds for all $t > 0$. This realization in turn yields a much deeper understanding of the geometric significance of the coefficients found when asymptotically expanding nondegenerate oscillatory integrals (see \eqref{asympset}), namely that, up to a constant, the coefficients of the asymptotic expansion match the Taylor coefficients of the formal expansion of $\exp(i\square^* t/2)$ applied to the amplitude $\psi$ (here $\square^*$ is an appropriate adjoint of $\square$ since it need not be self-adjoint). For this reason, the estimates derived in Section \ref{geomsec} are presented in a general way without reference to the specific context of Theorem \ref{mainthm} to facilitate future application to other problems. 

The second stage of the proof comes in Section \ref{mainproof}, in which the results of Section \ref{geomsec} are applied to phases $\Phi$ satisfying the hypotheses of Theorem \ref{mainthm}.  The main argument of this section is built on the familiar analysis of boxes in the bi-dyadic decomposition of the plane.

\section{Geometric Oscillatory Integral Analysis}
\label{geomsec}

It is certainly the case that the {asymptotic} behavior of oscillatory integrals with nondegenerate critical points is well-understood: if $\Phi$ is a $C^\infty$ function on some neighborhood of $x_0 \in \R^n$ and $x_0$ is a nondegenerate critical point of $\Phi$, then
\begin{equation} \int e^{i \lambda \Phi(x)} \psi(x) dx \sim \sum_{j=0}^\infty a_j \lambda^{-\frac{n}{2}-j} \mbox{ as } \lambda \rightarrow +\infty \label{asym} \end{equation}
when $\psi \in C^\infty(\R^n)$ is supported on a sufficiently small neighborhood of $x_0$. Each  $a_j$ depends only on finitely many derivatives of $\Phi$ and $\psi$ at $x_0$.  For example, 
\begin{equation} a_0 =  \frac{\pi^{\frac{n}{2}} e^{i \frac{\pi}{4} \omega} \psi(x_0)}{\sqrt{ |\det \hess \Phi(x_0)|}} \label{zeroord} \end{equation}
where $\omega$ equals the number of positive eigenvalues of $\hess \Phi (x_0)$ minus the number of negative eigenvalues.
H\"{o}rmander \cite{hormanderI} and Stein \cite{steinha}, for example, give somewhat distinct proofs of these and other facts.  

Despite the wealth of knowledge, there are several important reasons to revisit the asymptotics \eqref{asym}. One is the need to employ a fully coordinate-independent approach that only exploits the {\it relative} geometry of the phase $\Phi$ and the amplitude $\psi$.  In order to {pose} a scalar oscillatory integral problem, the only structures intrinsically involved are an $n$-dimensional manifold ${\mathcal M}$, a measure $\mu$ of smooth density on ${\mathcal M}$, a real phase $\Phi$, and an amplitude $\psi$. With this minimal structure, the integral
\begin{equation} \int e^{ i \lambda \Phi} \psi d \mu \label{geomint} \end{equation}
makes sense and can be studied asymptotically or uniformly.  This formulation of the  problem may legitimately be considered qualitative, since, e.g., no information about magnitudes of derivatives of $\Phi$ or $\psi$ is available.  Instead, the challenge is to discover useful structures rather than imposing them.

Although the minimal structure of \eqref{geomint} is sufficient for asymptotics, meaningful uniform estimates do not seem possible without some additional assistance.  In the case of Theorem \ref{mainest}, the manifold ${\mathcal M}$ happens to come equipped with a torsion-free connection $\nabla_{\cdot}$: on $\R^n$, the standard connection is preserved by affine transformations, unlike metrics. The connection, though still essentially qualitative, imposes just enough additional structure for uniform estimates.

For readers uninspired by coordinate-independent methods, there is still another reason to reexamine this ostensibly simple statement \eqref{asym}.  In order to prove Theorem \ref{mainthm}, it is necessary to establish uniform estimates which are manifestly stable with respect to perturbations of the phase and the amplitude.  Because the Hessian determinant will be restricted to the region where it is comparable to $\epsilon$, special care must be taken when understanding the sort of perturbations which arise, as uniformity in $\epsilon$ can only be accomplished through a precise understanding of the terms (and, in particular, the magnitude of the derivatives) that will appear.  Along the way, it will be necessary to establish stability of the higher-order terms error terms of the asymptotic expansion as well.

\subsection{Oscillatory integrals and Schr\"{o}dinger equations}

The first step is a proof of Lemma \ref{bigschrodlem}, which identifies a fundamental relationship between oscillatory integral asymptotics and pseudo-Riemannian Schr\"{o}dinger equations. Specifically, it identifies every nondegenerate oscillatory integral as a sort of point fundamental solution of a Schr\"{o}dinger equation:
\begin{lemma}
Suppose $\Phi$ is a smooth, real-valued function on some open subset $U$ of am $n$-dimensional manifold ${\mathcal M}$ with torsion-free connection $\nabla$ and that $\Phi$ has a nondegenerate critical point at some point $p \in U$. Then there is a neighborhood $\Omega$ of $p$ and a smooth,  second-order differential operator $\square$ on $\Omega$  such that \label{bigschrodlem}
\begin{equation} \left( \frac{\partial}{\partial t} -  \frac{i}{2} \square \right) \left[ t^{-\frac{n}{2}} e^{i t^{-1} (\Phi-\Phi(p))} \right] = 0 \label{schrodtime} \end{equation}
for all $t > 0$. Moreover, for all sufficiently small smooth perturbations of $\Phi$ and $\nabla$, on the same set $\Omega$, corresponding operators $\square$ continue to exist so that the perturbed version of \eqref{schrodtime} (where $p$ is understood to always be the critical point of the pertrubed $\Phi$) still holds. The operator $\square$ depends only on $\Phi$ and $\nabla$ (i.e., no coordinate choices are necessary) and varies smoothly as they vary.
\end{lemma}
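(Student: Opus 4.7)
The strategy is to substitute the ansatz $\square = H^{ij}\nabla_i\nabla_j + b^i\nabla_i + c$ (for a symmetric $(2,0)$-tensor $H$, vector field $b$, and scalar $c$) into \eqref{schrodtime}, match powers of $t^{-1}$, and then solve the resulting pointwise system. Assume $\Phi(p) = 0$ without loss of generality; the torsion-freeness of $\nabla$ identifies $\nabla_i\nabla_j\Phi$ with the covariant Hessian of $\Phi$, and a direct computation yields
\begin{align*}
\left(\partial_t - \tfrac{i}{2}\square\right)\!\bigl[t^{-n/2}e^{it^{-1}\Phi}\bigr]
&= t^{-n/2} e^{it^{-1}\Phi}\Bigl[-\tfrac{1}{2t}\bigl(n - H^{ij}\nabla_i\nabla_j\Phi - b^i\nabla_i\Phi\bigr)\\
&\qquad\; - \tfrac{i}{2t^2}\bigl(2\Phi - H^{ij}\nabla_i\Phi\,\nabla_j\Phi\bigr) - \tfrac{ic}{2}\Bigr].
\end{align*}
For this to vanish identically in $t>0$ one needs $c = 0$ together with the purely phase-dependent constraint $H^{ij}\nabla_i\Phi\,\nabla_j\Phi = 2\Phi$ and the connection-dependent equation $H^{ij}\nabla_i\nabla_j\Phi + b^i\nabla_i\Phi = n$.

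For the tensor $H$, I would invoke the Morse lemma to produce smooth coordinates $y = (y^1,\ldots,y^n)$ on a neighborhood of $p$ in which $\Phi = \tfrac{1}{2}\sum_i \epsilon_i (y^i)^2$ with $\epsilon_i\in\{\pm 1\}$, and declare $H^{ij}$ to be the symmetric tensor whose components in these coordinates are $\epsilon^i\delta^{ij}$. Since $\partial\Phi/\partial y^i = \epsilon_i y^i$, the first constraint holds identically. For $b^i$, set $f := n - H^{ij}\nabla_i\nabla_j\Phi$: at $p$ the covariant Hessian reduces to the intrinsic Hessian, which in Morse coordinates is $\epsilon_i\delta_{ij}$, so $f(p) = 0$. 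Because the differential of $d\Phi$ at $p$ is the nondegenerate Hessian, the functions $\partial_i\Phi$ themselves serve as a local coordinate system near $p$, and Hadamard's lemma in these coordinates produces smooth $b^i$ with $f = b^i\partial_i\Phi$, which is the second equation.

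Smoothness and stability under perturbation then follow because Moser's homotopy proof of the Morse lemma can be arranged with smooth parameter dependence, the implicit function theorem tracks the perturbed critical point smoothly, and Hadamard's lemma preserves smoothness under smooth perturbations. Perturbing the connection changes only the covariant Hessian appearing in the second equation, and $b^i$ adapts smoothly in response, so $\square$ persists on a common neighborhood $\Omega$ of $p$ with smooth dependence on $(\Phi,\nabla)$.

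The principal technical obstacle I anticipate is arranging the Morse reduction so that $H^{ij}$ is a genuine intrinsic tensor field whose dependence on $(\Phi,\nabla)$ is manifestly smooth on a single neighborhood stable under perturbation, rather than a merely chart-dependent object. One should also note that the construction is non-canonical: to any admissible $H^{ij}$ one may add any symmetric tensor annihilating $d\Phi\otimes d\Phi$, with a compensating change in $b^i$, which is why the lemma asserts existence (not uniqueness) of $\square$.
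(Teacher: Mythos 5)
Your pointwise analysis is correct, and it reproduces exactly the constraints the paper's proof is organized around. Substituting a general second-order ansatz $\square = H^{ij}\nabla_i\nabla_j + b^i\nabla_i + c$ into \eqref{schrodtime} and matching powers of $t^{-1}$ gives $c=0$ together with the two identities
\begin{equation*}
H(d\Phi,d\Phi) = 2\Phi, \qquad \mathrm{tr}(H\nabla^2\Phi) + b(\Phi) = n,
\end{equation*}
and this is precisely the structure of the paper's argument: there $H = (\nabla^2_{\!1}\Phi)^{-1}\nabla^2_{\!2}\Phi(\nabla^2_{\!1}\Phi)^{-1}$ and $b = Z_\eta$. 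The computation in the proof of the paper's Proposition~\ref{schrodlem} matches yours term by term.

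Where you diverge from the paper is in how the tensor $H$ is produced, and you have correctly diagnosed the resulting gap. The Morse lemma does yield \emph{some} smooth $H$ satisfying $H(d\Phi,d\Phi) = 2\Phi$, but that $H$ is manufactured from a Morse chart, which is not canonically determined by $(\Phi,\nabla)$; the resulting $\square$ is therefore chart-dependent, and the lemma's assertion that ``$\square$ depends only on $\Phi$ and $\nabla$, no coordinate choices necessary'' is left unproved. This is not merely a cosmetic deficiency: the chart-freeness is what makes the perturbation stability straightforward and what allows Section~\ref{nondegw} to apply the same construction fiberwise on a foliation without tracking a moving family of Morse charts. Your closing paragraph names this obstacle, so the issue is one of an incomplete argument rather than a wrong one.

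The paper's resolution is to replace the Morse chart by a geodesic construction. For $\sigma > 0$ it defines a weighted Hessian by pulling $\nabla^2\Phi$ back along the geodesic $\gamma$ from $p$ to $q$ via parallel transport and averaging against the Beta-type weight $\sigma(1-s)^{\sigma-1}$, as in \eqref{weightedhessian}. Integration by parts along $\gamma$ (using that $p$ is a critical point of $\Phi$) then yields the two key identities of \eqref{hessid}:
\begin{equation*}
\nabla^2_{\!1}\Phi\big|_q(D,X) = X\Phi\big|_q, \qquad \nabla^2_{\!2}\Phi\big|_q(D,D) = 2\Phi(q),
\end{equation*}
where $D$ is the ``radial'' vector field $D|_q = \dot\gamma(1)$. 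The first identity says $(\nabla^2_{\!1}\Phi)^{-1}d\Phi = D$, so setting $H := (\nabla^2_{\!1}\Phi)^{-1}\nabla^2_{\!2}\Phi(\nabla^2_{\!1}\Phi)^{-1}$ gives $H(d\Phi,d\Phi) = \nabla^2_{\!2}\Phi(D,D) = 2\Phi$ for free, with an $H$ built purely from $\Phi$ and $\nabla$. Your Hadamard step for $b$ is then replaced by integrating the scalar ODE $D\eta = n - \square_0\Phi$ along geodesics emanating from $p$, using that the right side vanishes at $p$; this again references only $(\Phi,\nabla)$, and smooth dependence under perturbation is automatic because geodesics, parallel transport, and the weighted Hessians vary smoothly with the data. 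If you want to salvage the Morse-lemma route, you would need to fix a smooth, chart-independent normalization of the Morse coordinates (e.g.\ via a parametrized Moser homotopy with an explicit, $(\Phi,\nabla)$-canonical choice of primitive), which ends up being more delicate than the geodesic construction.
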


The first step in the proof is to observe that a geometrically-invariant Hessian of $\Phi$ is well-defined given a torsion-free connection (i.e., one does not need a metric).
Suppose $\nabla$ is such a connection and $\Phi$ is a real-valued function, both smooth and both defined on some open subset $U$ of the manifold ${\mathcal M}$.   We define the Hessian $\nabla^2 \Phi$ to be the quadratic form on vectors given by
\begin{equation} \nabla^2 \Phi (X,Y) := Y X \Phi - (\nabla_Y X) \Phi. \label{classichessian} \end{equation}
Clearly \eqref{classichessian} only depends on pointwise values of the vector field $Y$, and because $\nabla$ is torsion-free, $\nabla^2 \Phi$ is symmetric; thus $\nabla^2 \Phi$ is a well-defined tensor field.  It is a trivial but important fact that  $ \nabla^2 \Phi$ is independent of the connection at every critical point of $\Phi$ because the term $(\nabla_Y X) \Phi$  vanishes there.

Given a critical point $p$ of $\Phi$, let $\Omega_0$ and $\Omega_1$ be open neighborhoods of $p$ such that the closure of $\Omega_0$ is compact and contained in $\Omega_1$ and any two points $p', q'$ in the closure of $\Omega_0$ are connected by a unique geodesic curve $\gamma \subset \Omega_1$ (meaning $\nabla_{\dot \gamma} \dot \gamma = 0$, $\gamma(0) = p'$, and $\gamma(1) = q'$ as usual).  Note that this geodesic connectivity property of $\Omega_0$ and $\Omega_1$ will continue to be true for any sufficiently small perturbation of the connection $\nabla$.  It's also true that one may choose $\Omega_1$ sufficiently small so that $\Phi$ has a unique critical point in $\Omega_1$ at the point $p$.  The existence of a unique critical point in $\Omega_0$ and no critical points in $\Omega_1 \setminus \Omega_0$ will also continue to be true for sufficiently small perturbations of $\Phi$.
For any real $\sigma > 0$ and any $q \in \Omega_0$, we define a weighted Hessian $\nabla^2_{\! \! \sigma} \Phi$ at $q$ by taking $\gamma$ to be the geodesic with $\gamma(0) = p$, $\gamma(1) = q$, and setting
\begin{align}
\nabla^2_{\! \! \sigma} \Phi (X,Y) & := \sigma \int_0^1 (1-s)^{\sigma-1} \left.  \nabla^2 \Phi \right|_{\gamma(s)} \left( \left. X \right|_{\gamma(s)}, \left. Y \right|_{\gamma(s)} \right) ds, \label{weightedhessian}
 \end{align}
where we extend $X$ and $Y$ by reverse parallel transport along $\gamma$ so that $X_{\gamma(1)} = X$ and likewise for $Y$.  We are primarily interested in the cases $\sigma = 1,2$, but it is perhaps worth remarking that the usual Hessian \eqref{classichessian} is the limit of $\nabla^2_{\! \! \sigma} \Phi$ as $\sigma \rightarrow 0^{+}$.  Regardless of the choice of $\sigma$, all weighted Hessians agree at the critical point $p$.  
Because the geodesics depend smoothly on $p$ and the connection $\nabla$, the weighted Hessians will also vary smoothly when $\Phi$ and $\nabla$ are perturbed.  Let us also restrict $\Omega_0$ as necessary so that $\nabla_{\! \! 1}^2 \Phi$ is nondegenerate on the closure of $\Omega_0$. Such nondegeneracy will consequently also be true for small perturbations of $\Phi$ and $\nabla$.  The neighborhood $\Omega$ from Lemma \ref{bigschrodlem} can now be chosen to equal any open subset whose closure is contained in $\Omega_0$.  The operator $\square$ is identified and the proof of Lemma \ref{bigschrodlem} is completed via the next proposition.
\begin{proposition}
Given  $\psi \in C^\infty(\Omega_0)$, define the vector field $Z_{\psi}$ by \label{schrodlem}
\begin{align}
Z_{\psi} f &  :=  (\nabla_{\! \! 1}^2 \Phi)^{-1} \left( d \psi, d f \right) \label{vf1} 
\end{align}
for all $f$ on $\Omega_0$.  
Also consider the operator $\square_0$ and function $\eta$ on $\Omega_0$ given by
\begin{align} 
\square_0 f & :=  \mathrm{tr} ( (\nabla_{\! \! 1}^2 \Phi)^{-1} \nabla_{\! \! 2}^2 \Phi (\nabla_{\! \! 1}^2 \Phi)^{-1} \nabla^2 f), \label{schrod0} \\
\eta(q)  &  := \int_0^1 \left[ n - \square_0 \Phi ( \gamma(s)) \right] \frac{ds}{s} \label{etafn}, \end{align}
where $\gamma$ is the geodesic with $\gamma(0) = p$ and $\gamma(1) = q$.  Then the operator 
\begin{equation} \square f := \square_0 f + Z_\eta f \label{boxop} \end{equation}
satisfies \eqref{schrodtime} and varies smoothly under small perturbations of $\Phi$ and $\nabla$.
\end{proposition}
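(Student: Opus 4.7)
The plan is to substitute $u(t,q) := t^{-n/2} e^{i t^{-1} \phi(q)}$, with $\phi := \Phi - \Phi(p)$, into \eqref{schrodtime} and verify it pointwise. A direct time derivative gives
\[ \partial_t u = u\bigl[-\tfrac{n}{2} t^{-1} - i t^{-2} \phi\bigr], \]
so it suffices to show
\[ \square u = u\bigl[\,i n\, t^{-1} - 2 t^{-2} \phi\,\bigr]. \]
The $t^{-2}$ coefficient will come from a delicate geometric identity between $\nabla_1^2\Phi$ and $\nabla_2^2\Phi$; the $t^{-1}$ coefficient is essentially what forces the definition of $\eta$.

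The heart of the argument is a pair of identities for the \emph{radial geodesic field} $v(q) := \dot\gamma(1)$, where $\gamma$ is the unique geodesic in $\Omega_1$ from $p$ to $q$. I would first prove
\[ \nabla_1^2 \Phi|_q(v,w) = d\phi|_q(w) \quad \forall w \in T_qM, \qquad \nabla_2^2\Phi|_q(v,v) = 2\phi(q). \]
Both come from Taylor expansion along $\gamma$: since $\dot\gamma$ is parallel, one has $\frac{d}{ds}d\Phi|_{\gamma(s)}(W) = \nabla^2\Phi|_{\gamma(s)}(\dot\gamma, W)$ for any parallel $W$, and $\frac{d^2}{ds^2}\Phi(\gamma(s)) = \nabla^2\Phi(\dot\gamma,\dot\gamma)$. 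Integrating the first identity (using $d\Phi|_p = 0$) and comparing with the definition of $\nabla_1^2\Phi$ yields the first formula; integrating the second twice, using $\Phi(\gamma(0))=\Phi(p)$ together with the critical-point condition $\frac{d}{ds}|_{s=0}\Phi(\gamma(s)) = 0$, yields the second. The first identity inverts to $v^i = (\nabla_1^2\Phi)^{-1,ij}(d\phi)_j$, and with $A := (\nabla_1^2\Phi)^{-1} \nabla_2^2\Phi (\nabla_1^2\Phi)^{-1}$ the second identity then becomes the key contraction $A^{ij}(d\phi)_i(d\phi)_j = \nabla_2^2\Phi(v,v) = 2\phi$.

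Next I would assemble $\square u$. Since $\nabla^2 u = u\bigl[-t^{-2}\,d\phi \otimes d\phi + i t^{-1}\nabla^2 \phi\bigr]$, the contraction above gives
\[ \square_0 u = u\bigl[-2 t^{-2} \phi + i t^{-1} \square_0 \Phi\bigr], \]
while $du = i t^{-1} u\, d\phi$ and $(\nabla_1^2\Phi)^{-1}(d\eta, d\phi) = v\eta$ yield $Z_\eta u = i t^{-1} u \cdot (v\eta)$. To compute $v\eta$, I would apply the geodesic-scaling identity $\gamma_{p,\gamma(r)}(s) = \gamma(rs)$ to rewrite
\[ \eta(\gamma(r)) = \int_0^r \bigl[n - \square_0 \Phi(\gamma(u))\bigr]\,\frac{du}{u}, \]
so that $v\eta|_q = \frac{d}{dr}|_{r=1}\eta(\gamma(r)) = n - \square_0\Phi(q)$. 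The definition of $\eta$ is thus engineered precisely so that the two $\square_0\Phi$ contributions cancel, leaving $\square u = u[-2t^{-2}\phi + i n\, t^{-1}]$, which equals $(2/i)\partial_t u$.

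Finally, smoothness of $\eta$ near $p$ (and hence of $\square$) follows from the fact that $\square_0 \Phi(p) = \tr(\mathrm{Id}) = n$: the integrand $n - \square_0\Phi(\gamma(s))$ vanishes at $s=0$ uniformly in $q$, so Hadamard's lemma extends $[n - \square_0\Phi(\gamma(s))]/s$ smoothly to $[0,1] \times \Omega_0$ and $\eta$ is smooth. Smoothness under perturbations of $(\Phi,\nabla)$ reduces to the smooth dependence of geodesics, the weighted Hessians $\nabla_\sigma^2\Phi$, and $\eta$ on the data, and is preserved because $\square_0\Phi_\epsilon(p_\epsilon) = n$ remains true at the perturbed critical point. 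The main obstacle is spotting the twin identities $\nabla_1^2\Phi(v,\cdot) = d\phi$ and $\nabla_2^2\Phi(v,v) = 2\phi$, from which the $t^{-2}$ coefficient emerges; once these are in hand, the cancellation in the $t^{-1}$ coefficient essentially forces the form of $\eta$, and the rest is bookkeeping.
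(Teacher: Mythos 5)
Your proof is correct and follows essentially the same route as the paper: you establish the two radial identities $\nabla_1^2\Phi(v,\cdot)=d\phi$ and $\nabla_2^2\Phi(v,v)=2\phi$ by integrating Taylor expansions along the geodesic (equivalent to the paper's integration by parts on the weighted Hessians), invert the first to identify $v$ with $(\nabla_1^2\Phi)^{-1}d\phi$, compute $\square_0 u$ and $Z_\eta u$, and reduce the claim to $v\eta = n-\square_0\Phi$, which the integral formula for $\eta$ satisfies by the geodesic scaling relation. The smoothness argument via $\square_0\Phi(p)=n$ and Hadamard's lemma is the same as the paper's.
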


\begin{proof}
Without loss of generality, we may assume $\Phi(p) = 0$.  The smoothness of $Z_\psi$ is straightforward. Smoothness of $\eta$ follows once it is observed that $\square_0 \Phi$ must equal $n$ at the point $p$, because $\mathrm{tr} ( (\nabla_{\! \! 1}^2 \Phi)^{-1} \nabla_{\! \! 2}^2 \Phi (\nabla_{\! \! 1}^2 \Phi)^{-1} \nabla^2 \Phi)$ is the trace of the identity  at $p$ since $\nabla^2 \Phi = \nabla_{\! \! 1}^2 \Phi = \nabla_{\! \! 2}^2 \Phi$ at $p$.  

Let $D$ be the vector field on $\Omega_0$ such that, at $q$, $D = \dot \gamma(1)$ for the geodesic $\gamma$ with $\gamma(0) = p$ and $\gamma(1) = q$. We then have the identities
\begin{align}
\left. \nabla^2_{\! \! 1} \Phi \right|_{q} (D, X)   = \left. X \Phi \right|_{q}, \label{hessid} \qquad
\left. \nabla^2_{\! \! 2} \Phi  \right|_{q} (D,D)   = 2 \Phi(q),
\end{align}
for any vector $X$.
Both formulas are a consequence of integration by parts.  More specifically, one first expands definition \eqref{weightedhessian} using \eqref{classichessian} to conclude that
\begin{align*}
\left. \nabla_{\! \! \sigma}^2 \Phi  \right|_{q} (D, X) 
& = \sigma \int_0^1 (1-s)^{\sigma-1} \left[ \dot \gamma(s) (X \Phi) -(\nabla_{\dot \gamma} X) \Phi \right]_{\gamma(s)} ds.
\end{align*}
Now $(\nabla_{\dot \gamma} X) \Phi = 0$ because $X$ has been parallel transported. The remaining term satisfies $\dot \gamma(s) (X \Phi) = \frac{d}{ds} \left[  X \Phi (\gamma(s)) \right]$.  We integrate by parts to see that the first identity of \eqref{hessid} must hold (using the fact that the boundary term at $s=0$ vanishes because $\Phi$ has a critical point). Doing the same integration by parts with $\sigma=2$ and taking $X = D$ gives instead that
\[ \left. \nabla^2_{\! \! 2} \Phi  \right|_{q} (D,D) = 2 \int_0^1 (\dot \gamma \Phi)(\gamma(s)) ds. \]
A second integration by parts in this formula gives the second part of \eqref{hessid}.

Since $\nabla_{\! \! 1}^2 \Phi$ is nondegenerate on $\Omega_0$, we may write first identity of \eqref{hessid} as  $(\nabla_{\! \! 1}^2 \Phi)^{-1} d \Phi = D$ (since $\nabla_{\! \! 1}^{2} \Phi ( D, \cdot) = d \Phi ( \cdot)$ as linear functionals on vectors).  If we apply $\square_0$ to $t^{-\frac{n}{2}} e^{i t^{-1} \Phi}$, we get that
\begin{align*} \square_0 \left( t^{-\frac{n}{2}} e^{i t^{-1} \Phi} \right) & = \left( - i t^{-1} \square_0 \Phi - t^{-2} (\nabla_{\! \! 1}^2 \Phi)^{-1} \nabla_{\! \! 2}^2 \Phi (\nabla_{\! \! 1}^2 \Phi)^{-1} (d \Phi, d \Phi) \right)  t^{-\frac{n}{2}} e^{i t^{-1} \Phi} \\
& = \left( - i t^{-1} \square_0 \Phi - 2 t^{-2} \Phi  \right)  t^{-\frac{n}{2}} e^{i t^{-1} \Phi}.
\end{align*}
Since $Z_\eta \Phi = Z_\Phi \eta = D \eta$, the identity \eqref{schrodtime} will hold if and only if $D \eta = n - \square_0 \Phi$.
Along any geodesic beginning at $p$, this equation becomes
\[ s \frac{d}{ds} \eta ( \gamma(s)) = n - \square_0 \Phi ( \gamma(s)). \]
Because $n - \square_0 \Phi$ vanishes at $p$, this equation has a unique smooth solution for which $\eta$ vanishes at $p$ as well, i.e., \eqref{etafn}.
\end{proof}

\subsection{Pointwise Schr\"{o}dinger Estimates}

The next lemma establishes explicit asymptotics with remainder terms for nondegenerate oscillatory integrals in terms of the operator $\square$ constructed in the previous section. The advance of this lemma over the variants from \cite{hormanderI} or \cite{steinha} is that it's now relatively easy to describe the coefficients in the asymptotic expansion in terms of the formal Taylor series expansion of the operator $\exp (i t \square^* / 2)$ at $t=0$, where $\square^*$ is an appropriate adjoint of $\square$ from \eqref{boxop}.  Specifically, the reader may see the connection between \eqref{asympset} and H\"{o}rmander's Theorem 7.7.5 \cite{hormanderI}, but the decay of the error term in H\"{o}rmander's result is not sharp (i.e., it requires more regularity than is necessary).  Instead, \eqref{asympset} is closer to H\"{o}rmander's Lemma 7.7.3, which applies explicitly to quadratic phases; although one could change variables {\it a la} Stein \cite{steinha} to apply this lemma more generally, this method would still yield a coordinate-dependent estimate for the error term.  

\begin{lemma}
Let $\Phi$ and $\Omega$ be the operator and open set identified in Lemma \ref{schrodlem} and consider the function \label{schrodest}
\[ I_\psi (t) := \int_{\Omega} t^{-\frac{n}{2}} e^{i t^{-1} \Phi} \psi~d \mu \]
for $\psi$ smooth and compactly supported on $\Omega$ and $d \mu$ some measure generated by a smooth, nonvanishing density $\mu$ on ${\mathcal M}$. Let $\omega$ equal the number of positive eigenvalues of $\nabla^2 \Phi(p)$ minus the number of negative eigenvalues. Then as $t \rightarrow 0^+$, the difference
\begin{equation} I_\psi(t) - \pi^{\frac{n}{2}} e^{i \frac{\pi}{4} \omega} e^{i t^{-1} \Phi(p)}  \left( \frac{\sqrt{|\det \nabla^2 \Phi|}}{\mu}(p) \right)^{-1} \sum_{\ell=0}^N \frac{(i {\square^*)}^\ell \psi(p)}{2^{\ell} \ell!} t^\ell \label{asympset} \end{equation}
is $O(t^{N+1})$ as $t \rightarrow 0^+$ for each $N$.  The operator $\square^*$ is the adjoint of \eqref{boxop} with respect to $d \mu$.  If the magnitude of the difference \eqref{asympset} is denoted $E_{\psi}^N(t)$ and if $k$ is any integer strictly greater than $\frac{n}{2}$, then
\begin{align}
E_\psi^N(t) & \lesssim t^{N+1}  \left( \int | (\square^*)^{N+1} \psi| d \mu \right)^{1 - \frac{n}{2k}} \left( \int | (\square^*)^{N+k+1} \psi| d \mu \right)^{\frac{n}{2k}}, \label{error1}
\end{align}
where the implicit constant depends only on $N$, $n$, and $k$.
\end{lemma}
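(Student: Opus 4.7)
The strategy is to promote the pointwise Schr\"odinger equation of Lemma \ref{bigschrodlem} to a first-order ODE in $t$ for $I_\psi(t)$, apply Taylor's theorem, and identify the resulting coefficients by the classical zeroth-order stationary phase formula \eqref{zeroord}. I would reduce at the outset to $\Phi(p)=0$ by pulling $e^{it^{-1}\Phi(p)}$ out of the integral, then multiply \eqref{schrodtime} by $\psi\,d\mu$ and integrate over $\Omega$; two integrations by parts (valid since $\psi$ is compactly supported in $\Omega$) pass $\square$ to its $d\mu$-adjoint $\square^*$, producing
\[\partial_t I_\psi(t)=\tfrac{i}{2}I_{\square^*\psi}(t), \qquad \text{and iteratively} \qquad \partial_t^\ell I_\psi(t)=(i/2)^\ell I_{(\square^*)^\ell\psi}(t).\]
Taylor's theorem in $t$ at $t=0^+$ then gives $I_\psi(t)=\sum_{\ell=0}^N\frac{(i/2)^\ell t^\ell}{\ell!}\lim_{s\to 0^+}I_{(\square^*)^\ell\psi}(s)+R_N(t)$, with integral remainder $R_N(t)=\frac{(i/2)^{N+1}}{N!}\int_0^t(t-s)^N I_{(\square^*)^{N+1}\psi}(s)\,ds$. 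Each limit $\lim_{s\to 0^+}I_\chi(s)$ is computed by the classical stationary phase formula \eqref{zeroord} and equals $\pi^{n/2}e^{i\pi\omega/4}(\mu(p)/\sqrt{|\det\nabla^2\Phi(p)|})\chi(p)$; applied to $\chi=(\square^*)^\ell\psi$ this reproduces \eqref{asympset} exactly, so that $E_\psi^N(t)=|R_N(t)|$.

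The principal obstacle is the bound \eqref{error1} on the remainder. It suffices to prove the $s$-uniform estimate
\[\sup_{s>0}|I_\phi(s)|\lesssim \|\phi\|_{L^1(d\mu)}^{1-n/(2k)}\|(\square^*)^k\phi\|_{L^1(d\mu)}^{n/(2k)},\qquad \phi=(\square^*)^{N+1}\psi,\]
since then pairing with $\int_0^t(t-s)^N\,ds=t^{N+1}/(N+1)$ supplies the $t^{N+1}$ factor. Neither obvious bound works on its own: the trivial bound $|I_\phi(s)|\le s^{-n/2}\|\phi\|_{L^1}$ diverges as $s\to 0^+$, while classical stationary phase yields a uniform bound that is pointwise in $\phi$ rather than $L^1$. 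My plan is to interpolate by Taylor-expanding $I_\phi$ about an auxiliary point $s'\ge s$,
\[I_\phi(s)=\sum_{j=0}^{k-1}\tfrac{(i/2)^j(s-s')^j}{j!}I_{(\square^*)^j\phi}(s')+\tfrac{(i/2)^k}{(k-1)!}\int_{s'}^s(s-r)^{k-1}I_{(\square^*)^k\phi}(r)\,dr,\]
applying the trivial $r^{-n/2}$ bound inside each piece, and optimizing in $s'$. The integer hypothesis $k>n/2$ is exactly what makes the one-dimensional integral $\int_0^{s'}r^{k-1-n/2}\,dr$ finite. Balancing $(s')^{-n/2}\|\phi\|_{L^1}$ against the integral term $(s')^{k-n/2}\|(\square^*)^k\phi\|_{L^1}$ pinpoints the choice $s'=(\|\phi\|_{L^1}/\|(\square^*)^k\phi\|_{L^1})^{1/k}$, at which both contribute precisely the claimed geometric mean.

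The final wrinkle, and the most delicate technical step, is the control of the intermediate terms $(s')^{j-n/2}\|(\square^*)^j\phi\|_{L^1}$ for $1\le j\le k-1$: at the optimum these are dominated by the geometric mean precisely when one has the log-convex interpolation $\|(\square^*)^j\phi\|_{L^1}\lesssim\|\phi\|_{L^1}^{1-j/k}\|(\square^*)^k\phi\|_{L^1}^{j/k}$. I would establish this by a short induction on $j$ running the same Taylor-plus-optimization scheme with $(\square^*)^j\phi$ in place of $\phi$, or by a direct Landau--Kolmogorov-type argument. With that in hand, combining the uniform bound with the $t^{N+1}/(N+1)$ prefactor yields \eqref{error1} with a constant depending only on $N$, $n$, and $k$.
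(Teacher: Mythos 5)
Your overall framework matches the paper's: differentiate via the Schr\"odinger identity to get $\partial_t^\ell I_\psi(t)=(i/2)^\ell I_{(\square^*)^\ell\psi}(t)$, invoke Taylor's theorem at $t=0^+$, identify the coefficients via the classical stationary-phase limit \eqref{zeroord}, and reduce the remainder bound to a one-dimensional interpolation estimate of the form $\sup_{s>0}|I_\phi(s)|\lesssim\|\phi\|_{L^1}^{1-n/(2k)}\|(\square^*)^k\phi\|_{L^1}^{n/(2k)}$. It is precisely in proving this one-dimensional estimate that your argument diverges from the paper and opens a genuine gap. Your Taylor expansion of $I_\phi$ about an auxiliary point $s'$ produces intermediate terms $(s-s')^j I_{(\square^*)^j\phi}(s')$ for $1\le j\le k-1$, which you propose to absorb via a Landau--Kolmogorov inequality
$\|(\square^*)^j\phi\|_{L^1}\lesssim\|\phi\|_{L^1}^{1-j/k}\|(\square^*)^k\phi\|_{L^1}^{j/k}$.
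This is left unproved, and neither suggested route actually supplies it: iterating your Taylor-plus-optimization scheme on $(\square^*)^j\phi$ bounds $\sup_s|I_{(\square^*)^j\phi}(s)|$, not $\|(\square^*)^j\phi\|_{L^1}$, and a ``direct Landau--Kolmogorov argument'' with constant depending only on $n$, $j$, $k$ (the uniformity the lemma asserts) is not available for a general variable-coefficient, non-self-adjoint second-order operator $\square^*$ with lower-order terms: the scaling and Cauchy--Schwarz/log-convexity devices behind the classical inequality do not transfer to $L^1$ norms of powers of such an operator.

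The paper sidesteps the intermediate derivatives entirely. Rather than Taylor-expanding $f:=I_\phi$, it integrates $f$ against a dilated kernel $\rho^{-1}\omega_k(\rho^{-1}\,\cdot\,)$ built so that the $k$-th distributional derivative of $\omega_k$ is exactly $\delta_0$ plus a smooth compactly supported bump $\tilde\omega_k$. Integrating by parts $k$ times then expresses $f(t_0)$ as an integral of $f$ against $\tilde\omega_k$ plus an integral of $f^{(k)}$ against $\omega_k$, with no derivatives of intermediate order appearing; the hypothesis $k>n/2$ is exactly what makes $\int_0^2 t^{-n/2}|\omega_k(t)|\,dt$ finite. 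Optimizing over $\rho$ then yields the geometric mean $A^{1-n/(2k)}B^{n/(2k)}$ with a constant depending only on $n$ and $k$, which is what \eqref{error1} requires after applying it to $f=I_{(\square^*)^{N+1}\psi}$. To repair your proof you would need to replace the single-point Taylor expansion by this kind of mollified divided-difference kernel (equivalently, a linear combination of expansions at several base points chosen to cancel the intermediate coefficients), or else genuinely establish the Landau--Kolmogorov interpolation for $\square^*$ with the claimed uniform constant, which is not to be expected in this generality.
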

\begin{proof}
We begin by establishing the following fact about functions on the real line: Suppose $f \in C^k(\R_{> 0})$ and satisfies the bounds $|f(t)| \leq A t^{-n/2}$ and $|f^{(k)}(t)| \leq B t^{-n/2}$ on $(0,\infty)$ for some positive $n < 2k$. Then $f$ is bounded on $(0,\infty)$ and $||f||_\infty \lesssim A^{(2k-n)/(2k)} B^{n/(2k)}$ with an implicit constant that depends only on $n$ and $k$.  
To see this, let $\eta$ be any $C^\infty$ function which is identically one on $(-\infty,1]$ and identically zero on $[2,\infty)$; for any positive integer $k$, let
\[ \omega_k(t) := \frac{t^{k-1}}{(k-1)!} \eta ( t) \chi_{[0,\infty)} (t) \mbox{ and } \tilde \omega_k(t) := \sum_{l=1}^k \binom{k}{l} \eta^{(\ell)} ( t) \frac{ t^{l-1}}{(l-1)!}. \]
The $k$-th derivative of $\omega_k$ in the sense of distributions is exactly $\tilde \omega_k$ plus a Dirac delta function at $t=0$. Consequently, for any $\rho > 0$ and any $t_0 \in \R$, we have
\[ (-1)^k \rho^k \int f^{(k)}(t+t_0) \rho^{-1} \omega_k( \rho^{-1} t) dt = f(t_0) + \int f(t+t_0) \rho^{-1} \tilde \omega_k (\rho^{-1} t) dt \]
for any function $f$ which is $C^k$ on some interval containing $[t_0,t_0+2 \rho]$.  Now if $|f(t)| \leq A t^{-n/2}$ on the positive real line, then
\[ \int \left| f(t+t_0) \rho^{-1} \tilde \omega_k (\rho^{-1} t) \right| dt  \leq A \int_\rho^{2 \rho} t^{-\frac{n}{2}} \rho^{-1} |\tilde \omega_k( \rho^{-1} t)| dt \leq A \rho^{-\frac{n}{2}} ||\tilde \omega_k||_1 . \]
Likewise, if $|f^{(k)}(t)| \leq B t^{-n/2}$, then
\begin{equation}  \rho^k \int \left| f^{(k)}(t+t_0) \rho^{-1} \omega_k( \rho^{-1} t) \right| dt \leq B \rho^{k - \frac{n}{2}} \int_0^2 t^{-\frac{n}{2}} |\omega_k(t)| dt. \label{temp1} \end{equation}
As long as $k > \frac{n}{2}$, the integral on the right-hand side of \eqref{temp1} will be finite.  Consequently, for any $t_0 > 0$, we will have
\[ |f(t_0)| \lesssim A \rho^{-\frac{n}{2}} + B \rho^{k - \frac{n}{2}}. \]
This inequality can be optimized over $\rho$ by taking $\rho$ comparable to $(A/B)^{1/k}$ (one may assume $B \neq 0$ since if $f$ is a polynomial then the bound $|f(t)| \leq A t^{-\frac{n}{2}}$ would imply $f$ vanishes identically). This leads to the conclusion that
\[ |f(t_0)| \lesssim A^{1- \frac{n}{2k}} B^{\frac{n}{2k}} \]
uniformly for all $t_0 > 0$, where the implicit constant depends only on $n$ and $k$.

Now we apply this observation to $f(t) := I_\psi(t)$.  Without loss of generality, assume that $\Phi(p) = 0$ at the critical point $p$.  By \eqref{schrodtime} we have that
\begin{equation} \frac{d^k}{dt^k} I_{\psi} (t) = \left( \frac{i}{2} \right)^k I_{(\square^*)^k \psi}(t) \label{differr} \end{equation}
and so we must have
\[ |I_\psi(t)| \leq t^{-\frac{n}{2}} \int |\psi| d \mu \mbox{ and } |I^{(k)}_\psi(t)| \leq 2^{-k} t^{-\frac{n}{2}} \int \left| \left( \square^*\right)^k \psi \right| d \mu. \]
Consequently the bound for $f$ above gives that
\begin{equation} |I_\psi(t)| \lesssim \left( \int \left| \left( \square^*\right)^k \psi \right| d \mu \right)^{\frac{n}{2k}} \left( \int |\psi| d \mu \right)^{1 - \frac{n}{2k}} \label{zeroest} \end{equation}
for any $k > \frac{n}{2}$ with a constant depending only on $n$ and $k$.

To compute higher-order errors, we go back to the Taylor polynomial for $I_{\psi}(t)$.  From the usual calculation \eqref{zeroord}, we know that $I_{(\square^*)^\ell \psi}(t_0)$ tends to finite limit as $t_0 \rightarrow 0^+$, namely to 
\begin{equation} \frac{\pi^{\frac{n}{2}} e^{i \frac{\pi}{4} \omega}  \mu(p) (\square^*)^\ell \psi(p)}{\sqrt{|\det \nabla^2 \Phi(p)|}} . \label{newzeroord} \end{equation}
Thus the sum appearing in \eqref{asympset} is simply the degree $N$ Taylor polynomial of $I_\psi(t)$ at $t=0$. By the remainder formula for Taylor's theorem, we know that the error $E^N_\psi(t)$ will be bounded in magnitude by $t^{N+1}$ times the supremum of the magnitude of the $(N+1)$-st derivative of $I_\psi(t)$. Combining \eqref{differr} and \eqref{zeroest} gives exactly \eqref{error1}.
\end{proof}

\subsection{Nondegenerate cutoffs}

\label{nondegw}

In addition to Lemma \ref{schrodest}, it will also be necessary to establish an essentially lower-dimensional estimate with respect to a foliation of $\Omega$ under the assumption that the amplitude $\psi$ is not necessarily smooth when examined transversely to the leaves. The net effect is that the nondegenerate operator \eqref{boxop} is replaced by a degenerate variant which takes into account the geometry of the foliation. At first glance, it appears as though there is a cost in terms of decay in the asymptotic expansion, but in reality this is not the case.

For convenience, we will take the foliation to be level sets of a smooth function $u$ with nonvanishing differential. Specifically, any smooth function $u$ will be called a nondegenerate cutoff function for the phase $\Phi$ at the point $p$ when the following criteria are satisfied: 
\begin{itemize}
\item At the point $p$,  $du \neq 0$.
\item The phase $\Phi$ has a critical point at $p$ when restricted to the hypersurface of constant $u$ passing through $p$. That is, $d \Phi \wedge d u = 0$ and $d \Phi = c d u$ for some constant $c$. For obvious reasons, this constant will be called $\frac{d \Phi}{d u}(p)$.
\item The critical point of $\Phi - \frac{d \Phi}{du}(p)  u$ at $p$ is nondegenerate when restricted to the hypersurface of constant $u$ passing through $p$.  That is, $\nabla^2 \Phi - \frac{d\Phi}{du}(p) \nabla^2 u$ restricts to a nondegenerate quadratic form on vectors to the hypersurface of constant $u$ at $p$. (Note that since $\Phi - \frac{d \Phi}{du}(p) u$ has a critical point,  $\nabla^2 \Phi - \frac{d \Phi}{du}(p) \nabla^2 u$ is independent of the connection at $p$.)

\end{itemize}

Near $p$, the set of points at which $d \Phi$ and $du$ are linearly dependent (i.e., $d \Phi \wedge du = 0$) will be a curve. To see this, simply define linearly-independent vector fields $X_1,\ldots,X_{n-1}$ near $p$ for which $X_j u = 0$. Now $d \Phi \wedge d u = 0$ holds exactly when $X_j \Phi = 0$ for $j=1,\ldots,n-1$. Since the critical point of $\Phi - \frac{d\Phi}{du}(p) u$ on the hypersurface of constant $u$ is assumed to be nondegenerate, we know that the matrix $X_k  X_j (\Phi - \frac{d \Phi}{du}(p) u)$ will have full rank at $p$. But $X_k X_j u = 0$ since the vector fields are tangent to level hypersurfaces of $u$.  Consequently $X_k X_j \Phi$ will also be a matrix of rank $(n-1)$, so the implicit function theorem guarantees that there is a curve $\gamma$ which is transverse to the level hypersurfaces of $u$ which will parametrize the zero set of $d \Phi \wedge d u$ in a neighborhood of $p$.  At all such points $p'$ sufficiently near $p$ at which $d \Phi \wedge du = 0$, we may further assume that $\nabla^2 \phi - \frac{d \Phi}{du}(p') \nabla^2 u$ is nondegenerate on its own level hypersurface of $u$.

Let us return to the analogue of Lemma \ref{bigschrodlem} when $\Phi$ is accompanied by a nondegenerate cutoff function $u$ at the point $p$.  We know that there exists an open interval $I$ and a curve $\gamma : I \rightarrow {\mathcal M}$ such that $\gamma$ parametrizes the zero set of $d \Phi \wedge du$ near $p$. When we parametrize $\gamma$ so that $u(\gamma(s)) = s$, we have that this parametrization varies smoothly in $u$ and $\Phi$ and the interval $I$ can be taken to be independent of sufficiently small perturbations. 
We may assume that $du$ does not vanish on the closure of $\gamma$, so we can find small neighborhoods in the hypersurfaces of constant $u$ passing through the curve $\gamma$ on which we may apply Lemma \ref{bigschrodlem} with the connection $\nabla_\cdot$ {\it on the hypersurfaces} of constant $u$ being chosen to equal the Levi-Civita connection of $\nabla^2 \Phi - \frac{d \Phi}{du} \nabla^2 u$ restricted to the hypersurfaces of constant $u$.  We conclude that there exists a neighborhood $\Omega$ of $p$ and a differential operator $\square$ on the closure of that neighborhood such that $\square$ commutes with multiplication by any function of $u$ and
\begin{equation}
\left( \frac{\partial}{\partial t} - \frac{i}{2} \square \right) \left[ t^{- \frac{n-1}{2}} e^{ i t^{-1} ( \Phi - f(u))} \right] = 0 \label{schrodtime2}
\end{equation}
where $f$ is the function given by $f(s) := \Phi(\gamma(s))$.  (Note that $f'(s) = d \Phi(\dot \gamma) = \frac{d \Phi}{du}(\gamma(s)) du( \dot \gamma)$, so $f'(s) = \frac{d \Phi}{du}(\gamma(s))$ when we parametrize $\gamma$ as already specified.) As always, we can assume that this neighborhood is constant for sufficiently small perturbations of $\Phi$, $u$, and the original connection $\nabla_\cdot$, and we know that $\square$ and $f$ vary smoothly under such perturbations.  We can (and must) also choose $\Omega$ so that $\nabla^2 \Phi - \frac{d \Phi}{du} \nabla^2 u$ is nondegenerate on level hypersurfaces of $u$ at every point on the closure of $\gamma$.


Once the Schr\"{o}dinger equation \eqref{schrodtime2} is known, one may develop an immediate analogue of the asymptotic expansion \eqref{asympset}. The proof is exactly the same as the proof of Lemma \ref{schrodest} except for one point, which is the evaluation of the limit
\[ \lim_{t \rightarrow 0^+} t^{-\frac{n-1}{2}} \int_{U} e^{i t^{-1} ( \Phi - f(u))} \psi d \mu. \]
This limit must certainly exist, since by Fubini we may express the integral as an integral over submanifolds of constant $u$.  In particular, the limit will be an integral of $\psi$ over the curve $\gamma$, since this curve parametrizes the critical points of the phase $\Phi - f(u)$ on the constant-$u$ submanifolds. Using stationary phase, we conclude that
\begin{equation} \lim_{t \rightarrow 0^+} t^{-\frac{n-1}{2}} \int_{U} e^{i t^{-1} ( \Phi - f(u))} \psi d \mu =  \pi^{\frac{n-1}{2}} e^{i \frac{\pi}{4} \omega} \int  \psi(\gamma(s)) d \varphi(s) \label{curveeq} \end{equation}
where $\omega$ is the number of positive eigenvalues minus negative eigenvalues when the Hessian $\nabla^2 \Phi - f'(u) \nabla^2 u$ is restricted to submanifolds of constant $u$.  Here we used the fact that the Hessian $\nabla^2 ( \Phi - f(u))$ and $\nabla^2 \Phi - f'(u) \nabla^2 u$ agree on hypersurfaces of constant $u$. The density $\varphi$ can be determined by the coarea formula: by Fubini, we may write
\[ \int  f d \mu = \int \left( \int_{u = s} f \left. d \mu \right|_{u=s} \right) ds \]
where $\left. \mu \right|_{u = s}$ is a density on the hypersurface $u=s$; if $X_1,\ldots,X_{n-1}$ are linearly-independent vectors tangent to $u=s$, then 
\[ \left. \mu \right|_{u=s}(X_1,\ldots,X_{n-1}) = \mu(V,X_1,\ldots,X_{n-1}) \]
for any vector field $V$ with $V u = 1$. Evaluating the limit \eqref{newzeroord} on the hypersurface $u = s$ gives $d \varphi = \frac{d \varphi}{ds } ds$, where
\begin{equation} \frac{d \varphi}{d s} = \frac{ \mu(\dot \gamma,X_1,\ldots,X_{n-1})}{\sqrt{|\det (\nabla^2 \Phi - f'(u) \nabla^2 u) (X_j, X_k)_{jk}|}} \label{rnd} \end{equation}
where $X_1,\ldots,X_{n-1}$ are any choice of linearly-independent tangent vectors to the hypersurface $u=s$ and in the denominator we mean the determinant of the $(n-1)\times(n-1)$ matrix whose $jk$-entry equals $(\nabla^2 \Phi - f'(u) \nabla^2 u) (X_j, X_k)_{jk}$. The vector $\dot \gamma$ appears because $\dot \gamma u =1$, and the Hessian determinant in the denominator comes from \eqref{newzeroord} combined with the fact that $\nabla^2 \Phi - f'(u) \nabla^2 u$ agrees with $\nabla^2 (\Phi - f(u))$ along these hypersurfaces.
Following Lemma \ref{schrodest} exactly, we conclude that the difference
\begin{equation} t^{- \frac{n-1}{2}} \int e^{i t^{-1} ( \Phi - f(u))} \psi d \mu - \pi^{\frac{n-1}{2}} e^{i \frac{\pi}{4} \omega} \sum_{\ell=0}^N \frac{i^\ell t^\ell}{2^{\ell} \ell!} \int  ((\square^*)^\ell \psi)(\gamma(s)) d \varphi(s) \label{curvelem} \end{equation}
is $O(t^{N+1})$, and specifically we have essentially the same expression for the error estimates, namely the inequality \eqref{error1} with the new operator $\square^*$, $n$ replaced by $n-1$ and $k > \frac{n-1}{2}$, i.e., \eqref{curvelem} is bounded in magnitude by a constant times 
\begin{equation} t^{N+1}  \left( \int | (\square^*)^{N+1} \psi| d \mu \right)^{1 - \frac{n-1}{2k}} \left( \int | (\square^*)^{N+k+1} \psi| d \mu \right)^{\frac{n-1}{2k}}. \label{errest2}
\end{equation}
Finally, since $\square$ (and hence $\square^*$) commute with multiplication by functions of $u$, we can multiply $\psi$ by $e^{i t^{-1} f(u)}$; the final conclusion is that
\begin{equation} t^{- \frac{n-1}{2}} \int e^{i t^{-1} \Phi} \psi d \mu - \pi^{\frac{n-1}{2}} e^{i \frac{\pi}{4} \omega} \sum_{\ell=0}^N \frac{i^\ell t^\ell}{2^{\ell} \ell!} \int_\gamma  e^{i t^{-1} \Phi} (\square^*)^\ell \psi ~ d \varphi \label{coareas}
\end{equation}
satisfies the error estimate \eqref{errest2}.

\subsection{One-dimensional analysis}

Before the proof of Theorem \ref{mainthm} can begin in full, there is a final issue to consider, namely the nature of the (now one-dimensional) phase $f(s) := \Phi (\gamma(s))$.   It is already established that $\nabla^2 \Phi - \frac{d \Phi}{du} \nabla^2 u$ is well-defined and independent of the connection along $\gamma$.  It is an easy calculation to see that
\begin{equation*} \nabla^2 (\Phi - f(u))(X,Y) = \nabla^2 \Phi(X,Y) - f'(u) \nabla^2 u(X,Y) - f''(u) (X u) (Y u).  \end{equation*}
Since $\Phi - f(u)$ is identically zero and has critical points along $\gamma$, we conclude that $\nabla^2 (\Phi - f(u)) (X, \dot \gamma) = 0$ along $\gamma$.  Thus
\begin{equation} f''(u) (X u) = \nabla^2 \Phi(X,\dot \gamma) - f'(u) \nabla^2 u(X,\dot \gamma) \label{secondd} \end{equation}
since we have parametrized $\gamma$ so that $\dot \gamma u = 1$.  
%
Let $X_1,\ldots,X_n$ be linearly-independent, smooth vector fields such that $X_j u = 0$ for $j=1,\ldots,n-1$ and $X_n = \dot \gamma$ on $\gamma$. If we look at the matrix $(\nabla^2 \Phi - f'(u) \nabla^2 u)(X_i,X_j)$, we see that the $ij$-entry is zero when $i = n$ and $j < n$ (or vice-versa). We also see that the $nn$-entry equals $f''(u)$.  Thus from \eqref{rnd} we find that
\begin{equation} f''(s) =  \pm \frac{\det (\nabla^2 \Phi - f'(s) \nabla^2 u)}{\mu^2}  \left( \frac{d \varphi}{ds} \right)^2
\label{seconder} \end{equation} 
(with the sign $\pm$ determined by the sign of the determinant of the minor $(\nabla^2 \Phi - f'(u) \nabla^2 u)(X_i,X_j)$ for $i,j \leq n-1$).

Now we return to the terms \eqref{coareas} appearing in the asymptotic expansion of
\[ t^{-\frac{n-1}{2}} \int e^{i t^{-1} \Phi} \eta(u) \psi ~ d \mu, \]
namely
\begin{equation} \int_I e^{i t^{-1} f(s)} \eta(s) \psi(\gamma(s)) \frac{d \varphi}{ds} ds \label{onedint2} \end{equation}
with $\psi$ replaced by $(\square^*)^\ell \psi$ for the higher-order terms, where we have introduced a smooth cutoff $\eta$ in the parameter $s$ (which equals $u$ in this parametrization).  For convenience, henceforth in this section we will let $\omega(s) := \eta(s) \psi(\gamma(s)) \frac{d \varphi}{ds}$. The next proposition shows how one can gain an additional factor of $t^{-1/2}$, bringing the total back to $t^{-n/2}$ as desired, when $u$ is equal to $(\det \nabla^2 \Phi / \mu^2)$ (i.e., when the cutoff function $u$ is chosen to be the Hessian determinant).

\begin{proposition}
Suppose $f : I \rightarrow \R$ is a $C^2$ function, and suppose that there exists a continuous function $g : I \rightarrow \R$ and constants $C$, $K$, and $\delta$ such that \label{estprop}
\begin{equation} |f''(s) - g(s)| \leq C |f'(s)|, \label{almost2} \end{equation}
\begin{equation} \delta \leq |g(s)| \leq K \delta. \label{comparable} \end{equation}
Then for all $t > 0$, 
\begin{equation} \left| \int_I e^{i t^{-1} f(s)} \omega(s) ds \right| \leq (t \delta^{-1})^{\frac{1}{2}} \left( 12 || \omega||_\infty + 4 || \omega'||_1 + 2 C \sqrt{K} || \omega||_1 \right). \label{uniform} \end{equation}
\end{proposition}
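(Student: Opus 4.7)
The plan is to split $I$ into a region where $|f'|$ is small, on which $|f''|$ is comparable to $\delta$ and a van der Corput second-derivative estimate applies, versus the complement where integration by parts can be carried out. Two preliminary reductions make this tractable. First, when $t \geq \delta/(4C^2 K)$ the factor $2C \sqrt{K}(t/\delta)^{1/2}$ appearing on the right-hand side of \eqref{uniform} is already at least $1$, so the trivial bound $|\int_I e^{it^{-1}f}\omega \, ds| \leq \|\omega\|_1$ suffices; henceforth I assume $t < \delta/(4 C^2 K)$, which ensures $M := (t\delta)^{1/2}$ satisfies $M < \delta/(2 C \sqrt{K}) \leq \delta/(2C)$. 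Second, since $g$ is continuous on $I$ with $|g| \geq \delta$, $g$ has constant sign; after replacing $f$ by $-f$ if necessary, assume $g > 0$, so $|f''-g| \leq C|f'|$ gives $f'' \geq \delta/2$ wherever $|f'| \leq \delta/(2C)$.

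The key structural observation, which I would prove first, is that $\tilde A := \{s \in I : |f'(s)| \leq \delta/(2C)\}$ is a single (possibly empty) closed subinterval of $I$. On $\tilde A$ the inequality $f'' > 0$ forces $f'$ to be strictly increasing; if $\tilde A$ had two components $\tilde A_1$ and $\tilde A_2$ with $\tilde A_1$ to the left, then continuity and local monotonicity at the right endpoint $b_1$ of $\tilde A_1$ would force $f'(b_1) = +\delta/(2C)$ (the alternative $-\delta/(2C)$ would push $f'$ just past $b_1$ into $(-\delta/(2C), \delta/(2C))$, contradicting $|f'|>\delta/(2C)$ on the intervening gap), while symmetric reasoning at the left endpoint $a_2$ of $\tilde A_2$ would force $f'(a_2) = -\delta/(2C)$; the intermediate value theorem applied to $f'$ between $b_1$ and $a_2$ then contradicts $|f'| > \delta/(2C)$ on that gap. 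Consequently $A := \{s \in I : |f'(s)| \leq M\} \subseteq \tilde A$ is also a single interval $[\alpha,\beta]$, and $I \setminus A$ comprises at most two intervals $B_L, B_R$, on each of which $|f'| > M$ and $f'$ has constant sign.

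On $A$, since $|f''| \geq \delta/2$ with constant sign, I would apply the standard van der Corput argument: the primitive $\Phi(s) := \int_\alpha^s e^{i t^{-1} f(r)}\, dr$ satisfies $\sup_s |\Phi(s)| \lesssim (t/\delta)^{1/2}$ (proved by splitting around the unique zero of $f'$ in $A$ at the optimal scale $h \sim (t/\delta)^{1/2}$), and one integration by parts against $d\Phi$ yields $|\int_A e^{i t^{-1} f}\omega\, ds| \lesssim (t/\delta)^{1/2} (\|\omega\|_\infty + \|\omega'\|_{L^1(A)})$. On $B_R$ (and analogously $B_L$), one integration by parts produces boundary terms of size $t\|\omega\|_\infty/M = (t/\delta)^{1/2}\|\omega\|_\infty$ per endpoint (using $|f'|\geq M$ at both endpoints), an $\omega'$-integral bounded by $(t/\delta)^{1/2}\|\omega'\|_{L^1(B_R)}$, and the residual term $t\int_{B_R}|\omega f''|/f'^2\, ds$, whose careful estimation is the main obstacle.

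To avoid a spurious $K\|\omega\|_1$ contribution from this residual term, I would further decompose $B_R = B_R' \cup B_R''$ with $B_R' := B_R \cap \tilde A$ (so $M < |f'| \leq \delta/(2C)$, $f'' > 0$, and $f'$ strictly monotone by the structural lemma) and $B_R'' := B_R \setminus \tilde A$ (so $|f'| > \delta/(2C)$). On $B_R'$ the change of variable $u = f'(s)$ converts $\int_{B_R'}(f''/f'^2)|\omega|\, ds$ into $\int |\omega(s(u))|\, du/u^2 \leq \|\omega\|_\infty/M$, contributing $(t/\delta)^{1/2}\|\omega\|_\infty$ after multiplication by $t$. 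On $B_R''$ the uniform estimate $|f''|/f'^2 \leq (2K+1)\cdot 2C^2/\delta$ (from $|f''| \leq K\delta + C|f'|$ together with $|f'| > \delta/(2C)$) yields $t\int_{B_R''}(\cdots) \lesssim tKC^2/\delta \cdot \|\omega\|_{L^1(B_R'')}$; in the regime $t < \delta/(4C^2 K)$ one writes $tKC^2/\delta = C\sqrt{K}(t/\delta)^{1/2}\cdot C\sqrt{Kt/\delta}$, where the last factor is at most $1/2$, giving the desired $\lesssim C\sqrt{K}(t/\delta)^{1/2}\|\omega\|_1$ bound. Summing the contributions from $A$, $B_L$, and $B_R$ and collecting numerical constants yields \eqref{uniform}.
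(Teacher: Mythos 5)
Your proposal is correct and follows essentially the same approach as the paper: the same decomposition of $I$ into the region where $|f'|\leq\sqrt{t\delta}$ (shown to be a single interval via a connectivity argument using $f''\gtrsim\delta$ there), the intermediate region where $\sqrt{t\delta}<|f'|\lesssim\delta/C$, and the region where $|f'|\gtrsim\delta/C$, with van der Corput integration by parts on the latter two. The only cosmetic difference is that on the small-$|f'|$ interval you invoke the oscillatory second-derivative van der Corput estimate, while the paper simply bounds that integral by the interval's length (at most $2\sqrt{t/\delta}$) times $\|\omega\|_\infty$; both yield the same power.
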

\begin{proof}
Consider the set $E_\epsilon := \set{ s \in I}{ |f'(s)| \leq \epsilon}$. This set must be connected when $\epsilon < C^{-1} \delta$. To see this, first observe that \eqref{almost2}, \eqref{comparable}, and the continuity of $g$ dictate that $f''(s) \neq 0$ on $E_\epsilon$ and has constant sign.  
Now choose any real number $a \in [-\epsilon,\epsilon]$ and consider the set of points $s$ such that $f'(s) = a$. If this set contains two points $s_1 < s_2$, then since $f''(s_1)$ and $f''(s_2)$ have the same sign, there must be an $s_3$ strictly in-between at which $f'(s_3) = a$ as well (thanks to the Intermediate Value Theorem). The point $s_3$ belongs to $E_\epsilon$, so iterating this process, we must be able to find a convergent sequence $\{s_k\}_{k=1}^\infty$ in $E_\epsilon$ such that $f'(s_k) = a$ for all $k$. But then the Mean Value Theorem and continuity of $f''$ require that $f''(\lim_{k} s_k) = 0$. Consequently the equation $f'(s) = a$ has at most one solution in $E_\epsilon$. But now if there happened to exist points $s < s' < s''$ such that $s, s'' \in E_\epsilon$ and $s' \not \in E_{\epsilon}$, then the Intermediate Value Theorem would dictate that $f'$ cannot possibly be single-valued on $[s,s''] \cap E_\epsilon$.

Returning to the proof of \eqref{uniform}, since
\[ \left| \int_I e^{i t^{-1} f(s)} \omega(s) ds \right| \leq || \omega||_1, \]
we may assume without loss of generality that $2 C \sqrt{ K t \delta^{-1}} \leq 1$.
If we fix $\epsilon = \sqrt{t \delta}$, then $\epsilon \leq \delta / (2C)$ (since $K \geq 1$). Thus it is established that $E_\epsilon$ is an interval and $|f''(s)| \geq  \delta/2$ on $E_\epsilon$. Consequently the length of $E_\epsilon$ is at most $2 \epsilon \delta^{-1}$, and
\[ \left| \int_{E_\epsilon} e^{i t^{-1} f(s)} \omega(s) ds \right| \leq 2 \epsilon \delta^{-1} ||\omega||_\infty. \]

For the remaining pieces, we divide $I \setminus E_\epsilon$ into sets $M_\epsilon$ and $H$, where $\epsilon < |f'(s)| < C^{-1} \delta$ on $M_\epsilon$ and $ |f'(s)| \geq  C^{-1} \delta$ on $H$. We know that $M_\epsilon$ is a union of no more than two intervals.  Consequently, $H$ can also be written as a union of no more than two intervals. On any interval $[a,b]$, the standard integration-by-parts trick found in the proof of van der Corput's lemma gives that
\[ \left| \int_a^b e^{i t^{-1} f(s)} \omega(s) ds \right| \leq t \! \left( \frac{|\omega(a)|}{|f'(a)|} \! + \! \frac{|\omega(b)|}{|f'(b)|} \! + \! \int_a^b \!  \left( |\omega(s)| \frac{|f''(s)|}{|f'(s)|^2} + \frac{|\omega'(s)|}{|f'(s)|} \right) ds \right). \]
On an interval of $M_\epsilon$, we have 
\[ \int_a^b \frac{|f''(s)|}{|f'(s)|^2}  ds = \left| \int_a^b \frac{f''(s)}{(f'(s))^2} ds \right| = \left| \frac{1}{f'(b)} - \frac{1}{f'(a)} \right|. \]
Thus
\[ \left| \int_a^b e^{i t^{-1} f(s)} \omega(s) ds \right| \leq 3 \epsilon^{-1} t  ||\omega||_\infty + \epsilon^{-1} t ||\omega'||_1 \]
on any such interval (with a coefficient of $3$ instead of $4$ because $f'(b)$ and $f'(a)$ have the same sign).
On an interval of $H$, on the other hand, we have $|f''(s)| \leq K \delta + C |f'(s)|$, so that
\[ \left| \int_a^b e^{i t^{-1} f(s)} \omega(s) ds \right| \leq 2 t \epsilon^{-1} ||\omega||_\infty +  C^2( K+1) t ||\omega||_1 \delta^{-1} + \epsilon^{-1} t || \omega'||_1 \]
(where we may use $\epsilon^{-1}$ in the first and last term since $\epsilon^{-1} < |f'(s)|$ here as well).
We conclude that
\[ \left| \int_I e^{i t^{-1} f(s)} \omega(s) ds \right| \leq  2 (\epsilon \delta^{-1} + 5 t \epsilon^{-1}) ||\omega||_\infty + 4 \epsilon^{-1} t ||\omega'||_1 + 2 C^2 (K+1) t \delta^{-1} ||\omega||_1. \]
Substituting $\epsilon = \sqrt{ t \delta}$ and using once again that $K \geq 1$ establishes the proposition since $2 C^2 (K+1) (t \delta^{-1}) \leq (2 C \sqrt{K t \delta^{-1}})^2$ and without loss of generality $(2 C \sqrt{K t \delta^{-1}})^2 \leq 2 C \sqrt{K t \delta^{-1}}$.
\end{proof}

\section{Proof of Theorem \ref{mainthm}}
\label{mainproof}

The proof of the main theorem proceeds by a standard bi-dyadic decomposition of the plane. On each piece we may assume that the amplitude $\psi$ is a smooth function of compact support. It will also be possible to assume that $\Phi$ is a smooth, real-valued phase defined on a neighborhood of the support of $\psi$ such that the mapping $x \mapsto \nabla \Phi(x)$ has at most Whitney fold singularities on this neighborhood.  The first step of the proof is to establish two inequalities for the integral on such pieces.  The first is that for any fixed $c > 0$ and any positive $N$,
\begin{equation} \left| \int e^{i \lambda (\Phi(x) + \xi \cdot x)} \chi( \epsilon^{-1} \det \hess \Phi(x)) \psi(x) dx \right| \lesssim \epsilon ( \lambda \epsilon)^{-N} \label{nocritical} \end{equation}
uniformly for all $\xi \in \R^2$ such that $|\nabla \Phi(x) + \xi| > c$ on the support of $\psi$ and uniformly in all positive $\epsilon, \lambda$, assuming that $\chi$ is smooth on $\R$ and compactly supported away from $0$.
The second is that
\begin{equation} \left| \int e^{i \lambda (\Phi(x) + \xi \cdot x)} \chi( \epsilon^{-1} \det \hess \Phi(x)) \psi(x) dx \right| \lesssim \lambda^{-1} \epsilon^{-\frac{1}{2}} \label{maybecritical} \end{equation}
uniformly for all $\xi \in \R^n$ and all $\lambda, \epsilon > 0$.

The proof of \eqref{nocritical} is a thoroughly standard ``non-stationary phase'' estimate, and follows, for example, from Lemma 2 of \cite{gressman2013III}. The key observation to be made in applying the lemma to the present situation is that the mapping $x \mapsto \nabla \Phi(x)$ having only Whitney folds means that the {\it gradient} of the Hessian determinant of $\Phi$ does not vanish when the Hessian determinant {\it does} vanish, so the zero set is a manifold, and the support of the integral is roughly contained in an $\epsilon$-neighborhood of the zero set of the Hessian determinant.

To prove \eqref{maybecritical}, it suffices to assume that $\xi$ lives in some compact subset of $\R^2$, since if $|\nabla \Phi + \xi| > c$ on the support of $\psi$, then the estimate \eqref{nocritical} applies and is sharper than the estimate \eqref{maybecritical} (since without loss of generality it may be assumed that $\epsilon$ is bounded above). Using compactness of the support of $\psi$ and the range of $\xi$ as well as a smooth partition of unity, we may assume without loss of generality that $\psi$ is supported within a small neighborhood of a critical point of $\Phi + x \cdot \xi$ for some $\xi$ under consideration (since every point $x$ which is not a critical point has a neighborhood, stable under small perturbations of $\xi$, on which \eqref{nocritical} holds uniformly for some small $c$).  In this case, $u = \det \hess \Phi$ will be a nondegenerate cutoff for the phase $\Phi(x) + \xi \cdot x$ by virtue of the assumption that $x \mapsto \nabla \Phi(x)$ has only Whitney folds.
We use \eqref{coareas} to estimate the integral.  In particular, we see that the difference
\begin{align*}
\int e^{i \lambda (\Phi(x) + \xi \cdot x)} & \chi (\epsilon^{-1} \det \hess \Phi(x)) \psi(x) dx \\ & - \pi^{\frac{n-1}{2}} \lambda^{-\frac{1}{2}} e^{i \frac{\pi}{4} \omega} \int e^{i \lambda( \Phi(\gamma(s)) + \gamma(s) \cdot \xi) }\chi( \epsilon^{-1} s) \psi(\gamma(s)) d \varphi(s) 
\end{align*}
will already be no greater than a constant times $\lambda^{-\frac{3}{2}}$, which again beats \eqref{maybecritical} since we may assume that $\lambda$ is bounded below in magnitude (otherwise the trivial estimate that the integral has size no greater than $\epsilon$ will beat \eqref{maybecritical} as well).  Now by \eqref{seconder} and Proposition \ref{estprop}, we conclude that
\[  \left| \int e^{i \lambda( \Phi(\gamma(s)) + \gamma(s) \cdot \xi) }\chi( \epsilon^{-1} s) \psi(\gamma(s)) d \varphi(s) \right| \lesssim \lambda^{-\frac{1}{2}} \epsilon^{-\frac{1}{2}}, \]
so \eqref{maybecritical} must hold as well.

The bi-dyadic decomposition itself is built from any smooth function $\eta$ supported on $[\frac{1}{2}, 2]$ such that 
\begin{equation} \sum_{j_1,j_2 \in \Z} \eta(2^{j_1} x_1, 2^{j_2} x_2) = 1 \label{partition} \end{equation}
away from the $x_1$- and $x_2$-axes.  For each $j := (j_1,j_2)$, we let
\[ \psi_{j}(x) := \psi(2^{-j_1} x_1, 2^{-j_2} x_2) \eta (|x_1|, |x_2|). \]
We note that for $j_1,j_2 \geq 0$, the $C^M$ norms of $\psi_{j}$ will be uniformly bounded for any fixed $M$, and if $\psi$ is supported sufficiently near the origin, we will have that
\[ \psi(x_1,x_2) = \sum_{j_1,j_2 \geq 0} \psi_{j} (2^{j_1} x_1, 2^{j_2} x_2) \]
away from the axes.

\subsection{Vertex Estimates}

Suppose $\alpha := (\alpha_1,\alpha_2)$ is a vertex of the Newton polygon of $\Phi$ with $\alpha_1, \alpha_2 \neq 0$.  We assume that $\alpha \neq (1,1)$, since if it were a vertex, the Hessian determinant of $\Phi$ would be nonvanishing on a neighborhood of the origin.
Let $j := (j_1,j_2)$ be any pair of positive integers such that 
\[ 2^{\alpha \cdot j} \Phi( 2^{-j_1} x_1, 2^{-j_2} x_2) - \frac{\partial^{\alpha} \Phi(0,0)}{\alpha!} x^{\alpha} \]
is a sufficiently small smooth function on the support of $\eta$ from \eqref{partition}.  Then
\begin{align*}
 \int e^{i \lambda (\Phi(x)+x \cdot \xi)} & \chi(\epsilon^{-1} u(x)) \psi_{j}(2^{j_1} x_1,2^{j_2} x_2) dx \\ =  2^{-\one \cdot j} & \int e^{i \lambda 2^{-\alpha \cdot j}  \Phi^\xi_j(x)} \chi(\epsilon^{-1} u(2^{-j_1}x_1, 2^{-j_2}x_2)) \psi_{j}(x) dx, \end{align*}
where
\[ \Phi_j^\xi(x) := 2^{\alpha \cdot j} \Phi(2^{-j_1} x_1,2^{-j_2} x_2) + 2^{\alpha \cdot j} (2^{-j_1} x_1 \xi_1 + 2^{-j_2} x_2 \xi_2). \] 
The Hessian determinant of $\Phi_j^\xi$ equals $2^{2( \alpha - \one) \cdot j} u(2^{-j_1} x_2,  2^{-j_2} x_2)$, and this must be a small perturbation of the Hessian determinant of the monomial term itself:
\[ \frac{\partial^{\alpha} \Phi(0,0)}{\alpha!} x^{\alpha}. \]
In particular, for sufficiently large $j$ (depending on $\partial^\alpha \Phi(0,0)$ and $\alpha$), it will be uniformly bounded above and below on the support of $\eta$.  Because $\chi$ is supported away from zero and infinity, the integrand will be identically zero unless $2^{-2(\alpha - \one)\cdot j} \approx \epsilon$.  On any such box, the derivatives of $\chi(\epsilon^{-1} u(2^{-j_1}x_1, 2^{-j_2}x_2))$ will be bounded independently of $\epsilon$ and $j$.

Putting these facts together with the estimate \eqref{maybecritical} (since the Hessian determinant of the {\it rescaled} phase is bounded above and below) gives
\[ \left|  \int e^{i \lambda \Phi^\xi(x)}  \chi(\epsilon^{-1} u(x)) \psi_{j}(2^{j_1}x_1,2^{j_2} x_2) dx \right| \lesssim 2^{-\one \cdot j} ( \lambda 2^{-\alpha \cdot j})^{-1} \approx \lambda^{-1} \epsilon^{-\frac{1}{2}} \]
uniformly in $\lambda$, $j$, and $\epsilon$.  Since the number of boxes on which $2^{-2(\alpha - \one)\cdot j} \approx \epsilon$ can hold is logarithmic in $\epsilon$ (since we have explicitly ruled out $\alpha = \one$), we can sum over all $j$ for only the cost of a logarithm of $\epsilon$.  If $\alpha$ is not on the diagonal, we can eliminate this logarithmic factor by observing that, for all but boundedly many of these boxes, the derivative of $\Phi_j^\epsilon$ must be bounded uniformly below.  Thus, we have the improved estimate \eqref{nocritical} on these boxes that
\[ \left|  \int e^{i \lambda \Phi^\xi(x)}  \chi(\epsilon^{-1} u(x)) \psi_j(2^{j_1}x_1,2^{j_2} x_2) dx \right| \lesssim 2^{-\one \cdot j} \min\{ 1, (\lambda 2^{- \alpha \cdot j})^{-N} \} \]
for any $N > 1$.  Summing over $j$'s such that $2^{-2(\alpha - \one) \cdot j} \approx \epsilon$ can be split into those terms on which $\lambda 2^{-\alpha \cdot j} \leq 1$ (on which the estimate $2^{-\one \cdot j}$ is used) and $\lambda 2^{-\alpha \cdot j} \geq 1$ (on which the estimate $2^{-\one \cdot j} (\lambda 2^{-\alpha \cdot j})^{-N}$ is used). As long as $\alpha$ is not on the diagonal, these estimates will decay exponentially away from the cut-off $\lambda 2^{-\alpha \cdot j} = 1$, and so the entire sum will converge and will be dominated by $2^{-\one \cdot j_0}$ for $j_0$ satisfying $\lambda = 2^{\alpha \cdot j_0}$ and $\epsilon = 2^{- 2(\alpha - \one) \cdot j_0}$, giving $2^{-\one \cdot j_0} \leq \lambda^{-1} \epsilon^{-1/2}$.

\subsection{Edge Estimates}

When considering the terms of the partition \eqref{partition}, the only boxes which are {\it not} covered by the so-called vertex estimates which were just proved are those boxes on which multiple boundary points of the Newton polygon of $\Phi$ (i.e., monomials in $x_1$ and $x_2$) have approximately the same size. These boxes are identifiable in terms of the compact edges of the polygon.  Generally, if the slopes of the compact faces are $-\beta_1^2,\ldots,-\beta_m^2$, then the only boxes from \eqref{partition} remaining to consider are boxes on which $\beta_\ell^{-1} j_1 - \beta_\ell j_2$ is uniformly bounded (for each of $\ell=1,\ldots,m$). There is, however an important exception: since no estimates were made for on-axis vertices, if edge $\ell$ meets the first (horizontal) axis then we must still deal with all boxes on which $\beta_\ell^{-1} j_1 - \beta_\ell j_2$ is bounded above. If the edge meets the second (vertical) axis, then we would still need to consider boxes on which $\beta_\ell^{-1} j_1 - \beta_\ell j_2$  is bounded below.  And if a single edge meets both axes, then no vertex estimates were proved and we would be starting from scratch.  In other words, we will fix a sufficiently large constant $C$ and for each integer $k \geq 0$ define the amplitude function
\[ \tilde \psi_k(2^{\beta_\ell k} x_1, 2^{\beta_\ell^{-1} k} x_2) := \mathop{\sum_{ |\beta_\ell^{-1} j_1 - k| \leq C}}_{|\beta_\ell j_2 - k| \leq C}  \psi_j(2^{j_1} x_1, 2^{j_2} x_2) \]
when edge $\ell$ meets neither axis, 
\[ \tilde \psi_k(2^{\beta_\ell k} x_1, 2^{\beta_\ell^{-1} k} x_2) := \mathop{\sum_{ |\beta_\ell^{-1} j_1 - k| \leq C}}_{\beta_\ell j_2 - k \geq -C}  \psi_j(2^{j_1} x_1, 2^{j_2} x_2) \]
when edge $\ell$ meets only the first (horizontal) axis, 
\[ \tilde \psi_k(2^{\beta_\ell k} x_1, 2^{\beta_\ell^{-1} k} x_2) := \mathop{\sum_{ \beta_\ell^{-1} j_1 - k \geq -C}}_{|\beta_\ell j_2 - k| \leq C}  \psi_j(2^{j_1} x_1, 2^{j_2} x_2) \]
when edge $\ell$ meets only the second (vertical) axis, and
\[ \tilde \psi_k(2^{\beta_\ell k} x_1, 2^{\beta_\ell^{-1} k} x_2) := \! \! \! \mathop{\sum_{ \beta_\ell^{-1} j_1 - k \geq -C}}_{|\beta_\ell j_2 - k| \leq C} \! \!   \psi_j(2^{j_1} x_1, 2^{j_2} x_2) +\! \! \!   \mathop{\sum_{ |\beta_\ell^{-1} j_1 - k| \leq C}}_{\beta_\ell j_2 - k > C} \! \!  \psi_j(2^{j_1} x_1, 2^{j_2} x_2)\]
when the edge $\ell$ meets both axes.  This way, the functions $\tilde \psi_k$ have uniformly bounded support and are always supported away from the origin.  They may or may not be supported away from the coordinate axes, but on their support we know that the mapping $(x_1, x_2) \mapsto \nabla p_\ell(x_1,x_2)$ (where $p_\ell$ is the polynomial corresponding to edge $\ell$) has at worst Whitney folds.  It is also true that the $C^M$-norms of these functions $\tilde \psi_k$ are uniformly bounded as a function of $k$.

In this case, we fix attention on the edge with slope $-\beta_\ell^2$. We pick $(j_1,j_2) = (\beta_\ell k, \beta_\ell^{-1} k)$ and make a similar scaling of $\Phi^\epsilon$ to the one used in the vertex estimates.  Let us specifically assume that the compact face, when extended, has first intercept $\beta_\ell^{-1} d_\ell (\beta_\ell + \beta_\ell^{-1})$ and second intercept $\beta_\ell d_\ell (\beta_\ell + \beta_\ell^{-1})$ (so that $d_\ell$ would equal the Newton distance of this particular extended face).  For convenience, let $\tilde \beta_\ell := \beta_\ell + \beta_\ell^{-1}$. Then we rescale by means of the transformation
\[ \Phi_k^\xi(x) := 2^{d_\ell \tilde \beta_\ell k} \Phi^\xi( 2^{-\beta_\ell k} x_1, 2^{-\beta_\ell^{-1} k} x_2). \]
Now for large $k$ we have that $\Phi_k^0 (x)$ differs from the polynomial $p_\ell(x)$, corresponding to the edge $\ell$ of the Newton polygon of $\Phi$, by a small smooth perturbation.  As before, the Hessian determinant of $\Phi_k^\xi$ is easily computed to equal $2^{2 (d_\ell-1) \tilde \beta_\ell k } u( 2^{-\beta_\ell k} x_1, 2^{-\beta_\ell^{-1} k} x_2)$.  We may assume that the distance $d_\ell$ is greater than one (since, by taking $k \rightarrow \infty$, this could only happen when the Hessian determinant of $\Phi$ was nonvanishing at the origin).  So the cutoff $u \approx \epsilon$ corresponds to the Hessian determinant of $\Phi_k$ being approximately $\epsilon 2^{2 (d_\ell - 1) \tilde \beta_\ell k}$. In particular, the number of such terms $k$ can therefore be at most comparable to the logarithm of $\epsilon$ (since the Hessian determinant of $\Phi_k$ will be uniformly bounded above on the support of a cutoff function $\eta$ which is contained in $[-2,2] \times [-2,2]$ in the rescaled coordinates).

By \eqref{maybecritical}, we have
\begin{align*}
 \left| 2^{- \tilde \beta_\ell k} \int \right. & \left. \vphantom{\int} e^{i \lambda 2^{-d_\ell \tilde \beta_\ell k} \Phi_k^\xi(x)} \chi(\epsilon^{-1} 2^{-2 (d_\ell - 1) \tilde \beta_\ell k} \det \hess \Phi_k(x) ) \tilde \psi_k (x)  dx \right| \\
 & \lesssim 2^{-\tilde \beta_\ell k} (\lambda 2^{-d_\ell \tilde \beta_\ell k})^{-1} (\epsilon^{-1} 2^{-2 (d_\ell - 1) \tilde \beta_\ell k})^{\frac{1}{2}} \approx \lambda^{-1} \epsilon^{-\frac{1}{2}}.
 \end{align*}
 Simply summing this estimate over $k$ gives the desired estimate $\lambda^{-1} \epsilon^{-\frac{1}{2}}$ times a logarithmic factor in $\epsilon$. However, as before, we may eliminate the logarithm by using the fact that, for any fixed $\xi$, there will be at most a bounded number of boxes on which the gradient of $\Phi^\xi_k$ vanishes. Away from this finite collection of boxes, we can use \eqref{nocritical} with $N=1$ to obtain
\begin{align*}
 \left| 2^{- \tilde \beta_\ell k} \int \right. & \left. \vphantom{\int} e^{i \lambda 2^{-d_\ell \tilde \beta_\ell k} \Phi_k^\xi(x)} \chi(\epsilon^{-1} 2^{-2 (d_\ell - 1) \tilde \beta_\ell k} \det \hess \Phi_k(x) ) \tilde \psi_k (x) dx \right| \\
 & \lesssim 2^{-\tilde \beta_\ell k} (\lambda 2^{-d_\ell \tilde \beta_\ell k})^{-1} (\epsilon 2^{2 (d_\ell - 1) \tilde \beta_\ell k})^{1-1} \approx 2^{(d_\ell - 1) \tilde \beta_\ell k} \lambda^{-1}.
 \end{align*}
Since we have assumed $d_\ell > 1$, when we sum over $k$, the entire sum will be comparable to the term for the largest value of $k$, which occurs when $2^{2(d_\ell - 1) \tilde \beta_\ell k} \approx \epsilon^{-1}$. So once again, the entire sum is dominated by $\lambda^{-1} \epsilon^{-1/2}$.

\bibliography{mybib}
\end{document}